\documentclass[12pt,english]{amsart}
\textheight=9in
\textwidth=6.5in
\headheight6.15pt 
\calclayout
\parskip=0pt plus 1pt 
%

\usepackage{float}
\usepackage{amsmath,amsfonts,amssymb}
\usepackage{stmaryrd}
\usepackage{graphicx, epsfig, psfrag}
\usepackage{epstopdf}
\usepackage{enumerate}
\usepackage{mathrsfs}
\usepackage{algorithm2e} 
\usepackage{xcolor}
\usepackage{multirow} 
\usepackage{soul}
\usepackage[normalem]{ulem}



\newtheorem{thm}{Theorem}[section]



%
\theoremstyle{plain}
\newtheorem{theorem}{Theorem}

\newtheorem{remark}[theorem]{Remark}
%


\newcommand\N{\mathbb N}
\newcommand\R{\mathbb R}

\def\la{{\langle}}
\def\ra{{\rangle}}

\def\bG{{\boldsymbol G}}

\def\btheta{{\boldsymbol\theta}}


\definecolor{bluegreen}{rgb}{0,0.75,0.75}


\def\Ion{{\text{Ion}}}
\def\ion{{\text{ion}}}
\def\Na{{\text{Na}}}
\def\K{{\text{K}}}

\def\rand{{\text{rand}}}

\def\Error{{\text{Error}}}
\def\dist{\operatorname{{dist}}}

\begin{document}

\title[The inverse problem of conductances determination]{A Computational Approach for the inverse problem of neuronal conductances determination}

\author{Jemy A. Mandujano Valle}
\address{Laborat\'orio Nacional de Computa\c c\~ao Cient\'\i fica, Petr\'opolis - RJ, Brazil}
\email{jhimyunac@gmail.com}
\author{Alexandre L. Madureira}
\address{
Laborat\'orio Nacional de Computa\c c\~ao Cient\'\i fica, Petr\'opolis - RJ, Brazil} 
\address{
  EPGE - 
  Funda\c c\~ao Get\'ulio Vargas, Rio de Janeiro - RJ, Brazil} 
\email{alm@lncc.br, alexandre.madureira@fgv.br}
\author{Antonio Leit\~ao}
\address{
Department of Mathematics, Federal University of St. Catarina, P.O. Box 476,
88040-900 Florian\'opolis, Brazil}
\email{acgleitao@gmail.com}
\thanks{The first author would like to thank PCI-CNPq (301330/2020-4) for its financial suport. Also, the second author acknowledges the support of CNPq (grant 307392/2018-0) and FAPERJ (grant E-26/210.162/2019), and the third author acknowledges support from the research agency CNPq (grant 311087/2017-5), and from the AvH Foundation.}

\begin{abstract}
The derivation by Alan Hodgkin and Andrew Huxley of their famous neuronal conductance model relied on experimental data gathered using the squid giant axon. However, the experimental determination of conductances of neurons is difficult, in particular under the presence of spatial and temporal heterogeneities, and it is also reasonable to expect variations between species or even between different types of neurons of the same species.  

We tackle the inverse problem of determining, given voltage data, conductances with non-uniform distribution in the simpler setting of a passive cable equation, both in a single or branched neurons. To do so, we consider the minimal error iteration, a computational technique used to solve inverse problems. We provide several numerical results showing that the method is able to provide reasonable approximations for the conductances, given enough information on the voltages, even for noisy data.

\end{abstract}
\keywords{cable equation, conductances determination, inverse problems}

\date{May 12, 2020}
\maketitle
\section{Introduction.}
The seminal model of Hodgkin and Huxley~\cite{H-H1952} of neuronal voltage conductance describes how action potential occurs and propagates. It is a landmark model and presents an outstanding combination of modeling based on physical arguments and experimental data, needed to determine the behavior of ion channels. As a part of their work, they modeled the macroscopic behavior of the conductances by designing several mathematical functions (the $\alpha$'s and $\beta$'s) that make the computed voltage to behave as the data. In this paper, we propose a numerical procedure to approximate the conductances of the ion channels, using an iterative method to obtain the unknown parameters. 

Finding the conductances is crucial if one wants to emulate the neuronal voltage propagation using computational models, since the conductances \emph{are part of the data required by the Hodgkin and Huxley model}. Using simpler models might be an alternative, but it is always necessary to find out what are the physiological parameters.  What we would like to offer is a computational way to determine the conductances based on experimental data, and we consider our method a step towards that final goal. The method can also be extended to accommodate excitatory and inhibitory synapses, and could be used, in principle, in several nonlinear models, such as the FitzHugh-Nagumo, Morris-Lecar, Hodgkin-Huxley, etc, with varying degrees of difficulty. We remark that our protocol differs from the experimental setup of Hodgkin-Huxley since voltage-clamp techniques are no longer needed, rendering the method simpler and amenable for use even considering heterogeneous spatially distributed channels. 

We use a simplified neuronal model, the passive cable equation~\cite{bell1990,ermentrout2010,schutter2009}, given by a parabolic partial differential equation (PDE). We consider first the case of a tree with no branches, i.e., possessing only a trunk represented by the interval $[0,L]$. The case of a branched tree is described in Section~\ref{subs2.3}. In the cable model the membrane electrical potential $V:[0,T]\times[0,L]\to\R$ solves   
\begin{equation}\label{equ1}
C_M V_t=\frac {r_a}{2R} V_{xx} -I(t,x)\quad{\text{for }\;\;0\;<t<T,\;\;\, 0<x<L}, 
\end{equation}
where $C_M$ represents membrane specific capacitance in microfarad per square centimeter $(\mu F/cm^2)$; the potential $V$ is in millivolt $(mV)$; the time $t$ is in milliseconds $(ms)$; $r_a$ is the radius of the fiber in millicentimeter $(m cm)$; the specific resistance $R$ is in ohm centimeter $(\Omega cm) $; the space $x$ is in centimeter $(cm)$; the specific ionic current $I$ is in microampere  per square centimeter $(\mu A/cm^2)$.  For the passive cable models, the ionic current is given by
\[
I(t,x)=G_L(V-E_L)+\sum_{i\in\Ion}G_i(t,x)\bigl(V(t,x)-E_i\bigr), 
\]
where the constant leak specific conductance $G_L$ is in millisiemens per square centimeter $(m S/cm^2)$; $E_L$ represents leak equilibrium potential in millivolt $(mV)$; $\Ion$ is the set of ions of the model, e.g., $\Ion=\{\K,\Na\}$. Also, the membrane specific conductance $G_i$ for the ion $i\in\Ion$ is in millisiemens per square centimeter  $(m S/cm^2)$, and it might depend on spatial and temporal variables, as indicated in the notation. Finally, the Nernst potential $E_i$ for each ion $i\in\Ion$ is given in millivolt $(mV)$.

To Eq.~\eqref{equ1} we add boundary and initial conditions given by 
\begin{equation}\label{equ2}
  V_x(t,0)=p(t), \qquad V_x(t,L)=q(t),
  \qquad
 V(0,x)=r(x), \;\;{\text{for}\;\;0<t<T,\;\;0<x<L}.  
\end{equation}
We assume that the constants $C_M$, $r_a$, $R$, $G_L$, $E_L$, $E_i$, $T$ and $L$, and the functions $p$, $q$ and $r$ are given data. 

We next rewrite Eqs.~\eqref{equ1} and~\eqref{equ2} in a slightly more convenient form
\begin{equation}\label{equ3}
\left \{\begin{array}{l}
\displaystyle C_MV_t=\frac{r_a}{2R}V_{xx}-G_L(V-E_L)
- \sum_{i \in Ion}G_i(t,x)[V-E_i],\;\;\;\;  \text{  in  }  (0,T)\times(0,L),
\vspace*{0.2cm}\\
V(0,x)=r(x),\hspace{7.6cm}  \text{  in  }x \in [0,L],
\vspace*{0.2cm}\\ 
V_x(t,0)  =p(t),\;\; V_x(t,L)=q(t),\hspace{4.7cm}  \text{  in  }t \in [0,T]. 
\end{array} \right.
\end{equation}

Let $N_\ion$ be the number of ions of the set \Ion. For $\Ion=\{1,2,\cdots,N_{\ion}\}$, $\bG(t,x)=(G_1(t,x),\dots,G_{N_\ion}(t,x))$. The inverse problem of finding the ``correct'' $\bG$ given measurements of the voltage is highly nontrivial, in the sense that it leads to ill-posed problems~\cite{white1992}, and that it becomes even harder in the presence of \emph{spatially dependent parameters}. There are different approaches to deal with the problem in hand, but certainly no panacea. 

Hodgkin and Huxley~\cite{H-H1952} tackled such problem by a highly nontrivial data fitting, in a wonderful achievement made possible only due to an ingenious combination of experimental and biophysical insight. We refer to~\cite{Bezanilla08} for a quite interesting description of how experimental results evolved through time. From this same reference we quote: \begin{quote}``The simplicity of the squid axon [\dots] was rarely found in other cells such as neurons or cardiac myocytes''\end{quote} and it would be interesting to have a way to combine  experimental data and numerical simulation to unveil the macroscopic conductance behavior even in the absence of such simplicity. Wilfrid Rall and co-authors considered several related questions for the cable equation~\cite{rall1959, rall1960, rall1962, rall1977, rall1992-1, rall1992-2}. See also~\cite{SS98,jack1971, brown1981, durand1983, Aguanno1986, schierwagen1990,kawato1984}. In~\cite{willms1999} there is an interesting attempt to introduce heterogeneity into the Hodgkin and Huxley model. 

In terms of biologically inclined references, the authors of~\cite{alain2017} consider the branched cable equation with the chemical synapses and convert somatic conductances into dendritic conductances. There are several other articles~\cite{bedard2012,kobayashi2011,vich2017,vich2015,yacsar2016} dealing with the issue of determining conductances and pre-synaptic inputs with different techniques, ranging from deterministic to statistical and stochastic. However, it is far from clear if their approach can be mathematically justified and if it is possible at all to extend those ideas for spatially distributed conductances.

We consider next references with a stronger mathematical flavor. The uniqueness of solutions for finding constant parameters in the cable equation, and related methods, are discussed in~\cite{cox1998,cox2000,cox2004}, and~\cite{cox2001-1} for a nonlinear model; see also~\cite{avdonin2013,MR534419} for further considerations related to existence and uniqueness. In~\cite{cox2001-2}, a related problem was tackled based on the FitzHugh--Nagumo and Morris--Lecar models, where nonlinear functions modeling the conductances are sought. The method is based on fixed point arguments, and despite its ingenuity, it is not clear how to extend it to more involved models or to accommodate for spatially distributed ions channels. 

In~\cite{bell2005,tadi2002,cox2006,avdonin2013,avdonin2015}, the question of determining spatially distributed conductances is investigated through different techniques and algorithms. They differ considerably from our method and seem harder to generalize for other situations, as, for instance, when the domain is given by branched trees (with the obvious exception of~\cite{avdonin2013,avdonin2015}), for time-dependent conductances, and for general nonlinear equations, our ultimate goal.

We would like to stress that although neuroscience  models based on ordinary differential equations are, and will always be, of paramount importance, it is our belief that spatially distributed equations will grow in importance. And spatially distributed data will become easier to gather, in particular, due to techniques as \emph{voltage-sensitive dye imaging} (VSDI)~\cite{Casale15555,Grinvald}.

Inverse problems like the present one are ill-posed, and, under certain conditions, the \emph{minimal error method}~\cite{george2017,kaltenbacher2008,neubauer2018} provides a convergent iterative scheme. The main goal of the present paper is to develop such method to approximate the inverse problem of recovering the conductances in the cable equation. We also test the scheme under different scenarios.

The minimal error method is one of several  iterative regularization methods for obtaining stable solutions for ill-posed problems. It has the advantage of each iteration being ``cheap'' (it avoids inversions present in Newton-like methods) at the possible price of taking more iterations to converge. See~\cite{2019arXiv190303130N} for a nice review and comparison among the methods.

We next outline the contents of the paper. In Section~\ref{s:LM}, we present the method, detailing how it should be applied in the cases of non-branched and branched cables, where the geometry is given by a tree. Section~\ref{s:numerics} presents numerical results, and in Section~\ref{s:conc} we draw some concluding remarks. Finally, the Appendix provides some technical details regarding the method and the mathematics behind it. 

\section{Method: The minimal error scheme applied to the conductance determination}\label{s:LM}
We consider here an application of the minimal error method to the problem at hand. Knowing the voltage $V$ at the time-space domain $\Gamma$, we want to determine $\bG$ assuming that Eq.~\eqref{equ3} holds. We consider two different cases in voltage measurement. In the first case, we assume that $V$ is known at all time-space points, i.e., $\Gamma=[0,T]\times[0,L]$ (Table~\ref{tab0}, Case I). In the second case, we assume that the voltage is known only at endpoints and all the time. Thus $\Gamma=[0,T]\times\{0,L\}$ (Table~\ref{tab0}, Case II).
\begin{table}
	\centering \small
	\begin{tabular}{|l|l|l|l|}\hline
CASE I   & $\Gamma=[0,T]\times[0,L]=\{(t,x);\;\;0\leq t\leq T,\;\;0\leq x\leq L \} $   \\\hline
CASE II  & $\Gamma=[0,T]\times\{0,L\} =\{(t,x);\;\;0\leq t\leq T,\;\;x\in\{0,L\}\} $ \\\hline
	\end{tabular}
	\caption{Summary of the two different cases considered in this paper. We seek the unknowns $G_i$ assuming that Eq.~\eqref{equ3} holds and that a measure of the voltage $V$ is known at the space-time domain $\Gamma$ defined above. In case I, the data is known at all points and at all times; in case II, the data is known at two end-points and at all times.}
	\label{tab0}
\end{table}

Let $V|_\Gamma$ be the restriction of $V$ to $\Gamma$, and consider the nonlinear operator
\begin{equation}\label{e:Fdef}
F:D(F)\rightarrow R(F)
\end{equation}
that associates for a given $\bG\in D(F)$ the resulting voltage, i.e., $F(\bG)={V|}_\Gamma$, where $V$ solves Eq.~\eqref{equ3}. The domain $D(F)$ and the image $R(F)=L^2(\Gamma)$ (the space of square integrable functions) are properly defined in the Appendix~\ref{Apendix}. Given a smooth enough function $f$, we define its $L^2$ norm ${\|\cdot\|}_{L^2(\Gamma)}$ such that
\[
\|f\|_{L^2(\Gamma)}^2=\int_{\Gamma}|f(\xi)|^2d\,\xi. 
\]

We consider the inverse problem of finding an approximation for $\bG$ given the noisy data $V^\delta|_\Gamma$, where 
\begin{equation}\label{in4}
{\|V-V^{\delta}\|}_{L^2(\Gamma)}\le\delta,
\end{equation}
for some known noise threshold $\delta>0$. That makes sense since, in practice, the data ${V|}_\Gamma$ are never known exactly. In Section~\ref{s:numerics} we detail the type of noise introduced. 

Given an initial guess $\bG^{1,\delta}$, the minimal error approximation for $\bG$ is defined by the sequence 
\begin{equation}\label{equ6}
\bG^{k+1,\delta}=\bG^{k,\delta}+w^{k,\delta}F'(\bG^{k,\delta})^*(V^\delta|_\Gamma-F(\bG^{k,\delta})),
\end{equation}
for $k=1,2,\dots$, where $F'(\cdot)^*$ is adjoint of the G\^ateux derivative, and 
\begin{equation}
w^{k,\delta}=\frac{{\|V^\delta-F(\bG^{k,\delta})\|}^2_{L^2(\Gamma)}}{{\|F'(\bG^{k,\delta})^*\left(V^\delta-F(\bG^{k,\delta})\right)\|}_{D(F)}^2}.
\end{equation}

The minimal error iteration is a gradient method, as is the steepest descent method. Although the steps of both methods follow the same direction (the gradient), they differ on their step size~\cite{kaltenbacher2008}. As a stopping criteria we use the \emph{discrepancy principle} with $\tau>1$, i.e., we define the stopping iteration step $k_*$ such that 
\begin{equation}\label{equ7}
{\|V^\delta-F(\bG^{k_*,\delta})\|}_{L^2(\Gamma)}
\le \tau\delta
{\le\|V^\delta-F(\bG^{k,\delta})\|}_{L^2(\Gamma)},
\end{equation}
for all $1\le k<k_*$. In practice, stopping criteria are needed for all iterative methods, otherwise the scheme might stop before it is accurate enough, or diverge, or even waste computing time without significantly improving the solution. For inverse problems with noisy data this is even more crucial since running regularization iterative methods beyond certain threshold forces the method to ``fit the noise''. It is possible to show that, under certain conditions (we assume that is the case), $\bG^{ {k_*},\delta}$ converges to a solution of $F(\bG)=V$ as $\delta\to0$; see~\cite[Theorem 2.6]{kaltenbacher2008}.

{From Eqs.~\eqref{equ6} and~\eqref{equ7} we obtain an approximation $\bG^{k_*,\delta}$ for $\bG$. Although the adjoint $F'(\bG^{k,\delta})^*$ is not known, it is possible to show that Eq.~\eqref{equ6} is actually } 
\begin{equation}\label{equ16}
G_i^{k+1,\delta}(t,x)=G_i^{k,\delta}(t,x)- w^{k,\delta}(V^{k,\delta}(t,x)-E_i)U^k(t,x)\quad\text{for all }i\in\Ion,
\end{equation}
where
\begin{equation*}
w^{k,\delta}=\frac{{\|V^\delta-F(\bG^{k,\delta})\|}^2_{L^2(\Gamma)}}{\displaystyle \sum_{i\in \Ion}{ \big\|\left(V^{k,\delta}(t,x)-E_i\right)U^k(t,x)\big\|}^2_{D(F)}}.
\end{equation*}
Also $V^{k,\delta}$  solves Eq.~\eqref{equ3} with $\bG$ replaced by $\bG ^{k,\delta}$, and $U^k$ solves the following PDE with \emph{final condition}:
\begin{equation}\label{equ10}
\left \{\begin{array}{l}
\displaystyle
-\frac{r_a}{2R}U_{xx}-C_M U_t+G_LU\vspace*{0.1cm}+\sum_{i\in\Ion}G_i(t,x)U=\alpha_1 \left(V^\delta-V\right),\;\;\;\;  \text{  in  }  (0,T)\times(0,L), \\
U(T,x)=0,\hspace{8.75cm}  \text{  in  }x \in [0,L],
\vspace*{0.3cm}\\ 
\displaystyle U_x(t,0)=-\alpha_2\frac{2R}{r_a}\left(V^\delta(t,0)-V(t,0)\right),\hspace{4.0cm}  \text{  in  }t \in [0,T],\\
\displaystyle U_x(t,L)=\alpha_2\frac{2R}{r_a}\left(V^\delta(t,L)-V(t,L)\right),\hspace{4.15cm}  \text{  in  }t \in [0,T].
\end{array} \right.
\end{equation}

The constants $\alpha_1$, $\alpha_2$ depend on the set $\Gamma$ as follows: 
\begin{equation}\label{e:alphadef}
(\alpha_1,\alpha_2)=\begin{cases}(1,0)&\text{if } \Gamma=[0,T]\times [0,L],
\\
(0,1)&\text{if }\Gamma=[0,T]\times \{0,L\}.\end{cases}
\end{equation}

In Theorem~\ref{t:main} of the Appendix, we show how to obtain Eq.~\eqref{equ16} from Eq.~\eqref{equ6}.

\begin{remark}\label{remark1}
Note from Eq.~\eqref{equ16} that $G_i^{k+1,\delta}(T,x)=G_i^{k,\delta}(T,x)$ for all $x\in[0,L]$ and every $k\in\N$, since, from Eq.~\eqref{equ10}, $U^k(T,x)=0$. Thus, $G_i^{k,\delta}$ is \emph{never corrected at the final time $T$}. To recover $G_i$ at time $T$, one could perform the computations up to $2T$, and then consider only the values up to $T$.
\end{remark}

The numerical scheme of our method is as follows. Check Table~\ref{tab0} for notation. Note from Algorithm~\ref{a:Land} that solutions of two PDEs are needed for each iteration. 

\begin{algorithm}[H]\label{a:Land}
\KwData{$V^\delta|_{\Gamma}$, $r$, $p$, $q$, $\delta$, $\tau$}
\KwResult{Compute an approximation for $\bG$ using minimal error Iteration Scheme}
Choose $\bG^{1,\delta}$ as an initial approximation for $\bG$\;
Compute $V^{1,\delta}$ from Eq.~\eqref{equ3}, replacing $\bG$ by $\bG^{1,\delta}$\;
k=1\;
\While{$\tau\delta\le{\|V^\delta-V^{k,\delta}\|}_{L^2(\Gamma)}$}{
  Compute $U^k$ from Eq.~\eqref{equ10}\;
  Compute $\bG^{k+1,\delta}$ using Eq.~\eqref{equ16}\;
  Compute $V^{k+1,\delta}$ from Eq.~\eqref{equ3}, replacing $\bG$ by $\bG^{k+1,\delta}$\;
  $k\gets k+1$\;
  }
 \caption{Minimal Error Iteration}
\end{algorithm}


\begin{remark}\label{remark2}
Whenever $\bG$ is time independent, and in this case we write $\bG(t,x)=\bG(x)$, the iteration is defined by 
\begin{equation}\label{equ17}
G_i^{k+1,\delta}(x)=G_i^{k,\delta}(x)-w^{k,\delta} \frac1T \int_0^T(V^{k,\delta}(t,x)-E_i)U^k(t,x)\,dt
\quad\text{for } i\in\Ion,
\end{equation}
where
\begin{equation*}
w^{k,\delta}=\frac{{\|V^\delta-F(\bG^{k,\delta})\|}^2_{L^2(\Gamma)}}{ \displaystyle\sum_{i\in\Ion}{\left\|\displaystyle\frac1T\int_0^T\left(V^{k,\delta}(t,x)-E_i\right)U^k(t,x)dt\right\|}^2_{ L^{\infty}(0,L)} }. 
\end{equation*}
\end{remark}

\subsection{The minimal error method applied to the conductance determination  defined on a tree}\label{subs2.3}
We now describe the equivalent formulation when the domain is given by a tree, and we consider for simplicity a tree with a single branch. Note however that the generalizations for more complex geometries are straightforward. In general, the tree $\Theta=\mathcal E\cup\mathcal V$ is defined by the set of edges $\mathcal E$ and the set of vertices $\mathcal V$. Boundary conditions are imposed on the terminal vertices $\mathcal V_T$, and transmission conditions are imposed on the interior vertices $\mathcal V_I$. Of course, $\mathcal V_T\cup\mathcal V_I=\mathcal V$. In the case depicted in Figure~\ref{f:tree},
\[
\mathcal E=\{e_1,e_2,e_3\},\quad\mathcal V =\{\nu_1,\nu_2,\nu_3,\nu_4\},\quad\mathcal V_T=\{\nu_1,\nu_3,\nu_4\}=\{\gamma_1,\gamma_2,\gamma_3\},\quad\mathcal V_I=\{\nu_2\}.
\]

\begin{figure}[h!]
	\includegraphics[ height=5cm, width=14.cm]{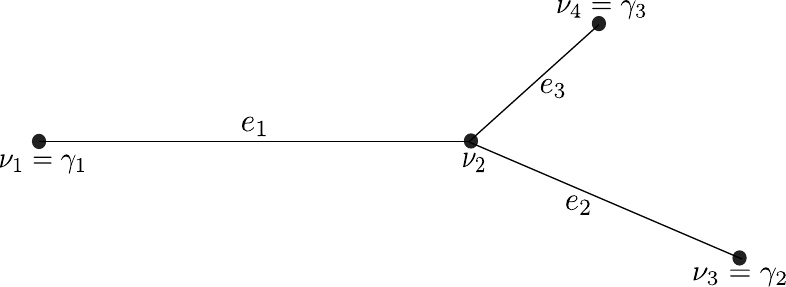}
	\caption{Example of a branched tree with one bifurcation point.}
	\label{f:tree}
\end{figure}

Our cable equation model defined on a branched tree is given by 
\begin{equation}\label{equ20}
\left \{\begin{array}{l}
\displaystyle C_MV_t=\frac{r_a}{2R}V_{xx}-G_L(V-E_L)- \sum_{i \in Ion} G_i(t,x)\left[V-E_i\right], \hspace*{0.3cm}\text{in}\;(0,T)\times\mathcal{E},
\vspace*{0.2cm}
\\
V(0,x)=r(x),\hspace*{7.5cm}\text{  in   }x \in  \Theta,
\vspace*{0.1cm}
\\
V_x(t,\gamma_1)=p(t),\hspace*{7.4cm}\text{  in  }t \in [0,T],
\vspace*{0.1cm}
\\
V_x(t,\gamma_2)=V_x(t,\gamma_3)=q(t),\hspace*{5.4cm}\text{  in }t \in [0,T],
\vspace*{0.2cm}
\\
{V}_x^{e_1}(t,\nu_2)-{V}_x^{e_2}(t,\nu_2)-{V}_x^{e_3}(t,\nu_2)=0,\hspace*{3.4cm}\text{  in }t \in [0,T],
\end{array} \right.
\end{equation}
where $V_x^{e_j}(t,\nu_2)$ denotes the derivative of $V$ at the vertex $\nu_2$ taken along the edge $e_j\in\{e_1,e_2,e_3\}$ in the direction towards the vertex.
%

Consider the operator~\eqref{e:Fdef}, where $F(\bG)=V(\cdot,\cdot)$ and $V$ solves Eq.~\eqref{equ20}. Given $V^\delta$, our goal is to obtain an approximation to $\bG$ using the iteration~\eqref{equ6}. To compute the adjoint operator $F^{'}(\cdot)^*$, we define the following PDE:
\begin{equation}\label{equ21}
\left \{\begin{array}{l}
\displaystyle-\frac{r_a}{2R}U_{xx}^k-C_MU_t^k+G_LU^k +\sum_{i \in Ion} G_i^{k,\delta}(t,x)U^k\displaystyle=V^\delta-V^{k,\delta},\hspace*{0.5cm}\text{in }(0,T)\times\mathcal{E},
\vspace*{0.2cm}
\\
U^k(T,x)=0,\hspace*{8.7cm}\text{  in   }x \in  \Theta,
\vspace*{0.2cm}
\\
U^k_x(t,0)=U^k_x(t,\nu_2)=U^k_x(t,\nu_3)=0,\hspace*{4.7cm} \text{  in }t \in [0,T],
\vspace*{0.2cm}
\\
{U^k_x}^{e_1}(t,\nu_1)-{U^k_x}^{e_2}(t,\nu_1)-{U^k_x}^{e_3}(t,\nu_1)=0,\hspace*{3.75cm}\text{  in } t \in [0,T].

\end{array} \right.
\end{equation}

We then compute $G_i^{k+1,\delta}$ according to~\eqref{equ16}. Remarks \ref{remark1}--\ref{remark2} also hold for this problem. 

\section{Results: Numerical Simulation}\label{s:numerics}
In this section we test the method under different scenarios. Of course, the solutions are obtained numerically, and for that we use finite difference scheme in space coupled with backward Euler in time. To compute the integral in Eq.~\eqref{equ17} we use the trapezoidal rule. In what follows we assume that the numerical approximations are accurate enough. All the experiments were performed using Matlab\textsuperscript{\textregistered}, and the codes are available at \cite{Mandujano2020}. 

To design our \emph{in silico} experiments, we first choose $\bG$ and compute $V$ from Eq.~\eqref{equ3}, obtaining then ${V|}_\Gamma$. Of course, in practice, 
{only} the  values of ${V^\delta|}_\Gamma$ are given by some experimental measures, and thus subject to experimental/measurement errors. In our examples, $V^\delta|_{\Gamma}$ is obtained by considering additive-multiplicative noise
\begin{equation}\label{equ22}
  {
    V^\delta(t,x)=V(t,x)+(aV+b)\rand_\Delta(t,x) \;\;\;\text{for all } (t,x) \in \Gamma,}
\end{equation}
{for scalars $a$, $b$, and $\rand_\Delta$ is a uniformly distributed random variable taking values in the range $[-\Delta,\Delta]$. The threshold $\delta$ is such that (cf. Eq.~\eqref{in4}) ${\|(aV+b)\rand_\Delta\|}_{L^2(\Gamma)}\le\delta$, and we impose then}
\begin{equation}\label{e:mn}
{{\|(aV+b)\|}_{L^2(\Gamma)}\Delta=\delta. }
\end{equation}

In our numerical examples, we use multiplicative and additive noises, i.e., $a=1/2$ and $b=1/2$ at Eq.~\eqref{equ22}. We noticed no qualitative difference in the results for other values of $a$ and $b$. 
\begin{remark}
It is not possible in general to predict how the added noise will affect the conductances since the operator $F$ defined in Eq.~\eqref{e:Fdef} is not bounded, meaning that small perturbation of the data might lead to large perturbation of the conductances. That is why inverse problems are so hard to approximate.  
\end{remark}


Next, given the initial guess $\bG^{1,\delta}$, the data ${V^\delta|}_{\Gamma}$, and the noise threshold $\delta$, we approximate $\bG$ using the Algorithm~\ref{a:Land}. Unlike in ``direct'' PDE problems where the exact solution usually has to be computed by numerical over-kill, here we have the exact $\bG$ and we use that to gauge the algorithm performance.

In our numerical examples we consider $M=50$ experiments for each $\delta$. Therefore, for an experiment $j$, we measured voltage ${{V^\delta|}_{\Gamma}}_j$ and compute the approximation $\bG^{k_*,\delta}_j$ of unknown conductance $\bG$. We define the mean and standard deviation for a function $f$ (in what follows, $f$ will be either $V^\delta|_\Gamma$ or $\bG^{k_*,\delta}$): 
\begin{equation}\label{equa18}
\mu_f(t,x)=\frac{1}{M}\displaystyle\sum_{j =1}^{M} f_j(t,x),
\qquad
\sigma_f(t,x)=\sqrt{\frac{1}{M}\displaystyle\sum_{j =1}^{M} (f_j(t,x)-\mu_f(t,x))^2 }, 
\end{equation}
if $f=\bG^{k_*,\delta}$ and $\Ion=\{1,2,\cdots,N_{\ion}\}$ then $\mu_f=\left(\mu_{G_1^{k_*,\delta}},\mu_{G_2^{k_*,\delta}},\cdots,\mu_{G_{\N_{\ion}}^{k_*,\delta}}\right)$.
	
From \eqref{equ7}, for iteration $k^*$, we introduce 
\begin{equation}\label{equ23}
\qquad
{
	\Error_{\bG}=\frac1{N_{\ion}}\sum_{i\in Ion}\int_{D(\bG)}\frac{|G_i-\mu_{G_i^{k_*,\delta}}|} {|G_i|}\times100\%, }
\end{equation}
where $D(\bG)$ denotes the domain where $\bG$ is defined. Given value $V$ and its approximation  $\mu_{{V^\delta|}_{\Gamma} }$, we define the error 
\begin{equation}\label{equ18}
\Error_{V}=\int_{\Gamma}\frac{\big|V(t,x)-\mu_{{V^\delta|}_{\Gamma} }(t,x) \big|}{|V(t,x)|}d\Gamma \times 100\%.
\end{equation}
\begin{remark}
There seems to be no common agreement in the literature on what would be the ``best'' error measurement, and the choice of the above \emph{Mean Absolute Percentage} error is fully justified in, e.g.,~\cite{DEMYTTENAERE201638}. In practice, after discretizing the equations and the unknown functions, only nodal values are known. Consider the space-time discretization $t_n=(n-1)T/(N-1)$ for $n=1,2\cdots,N$ and $x_j=(j-1)L/(J-1)$ for $j=1,2\cdots,J$. Thus, the relative error introduced above~\eqref{equ23} is approximated by 
\begin{equation}\label{equa20}
{\Error}_{\bG}=\frac1{N_{\ion}}\frac TN\frac LJ
\sum_{i\in\Ion}\sum_{n=1}^{J}\sum_{j=1}^N\left|{\frac{G_i(t_n,x_j)-G^{k,\delta}_i(t_n,x_j)}{G_i(t_n,x_j) }}\right|\times 100\%.
\end{equation}

Whenever $\bG$ is time independent, and in this case we write $\bG(t_n,x_j)=\bG(x_j)$, the mean absolute percentage error is defined by
\begin{equation}\label{equa21}
{\Error}_{\bG}=\frac{1}{N_{\ion}}\frac LJ
\sum_{i\in\Ion}\sum_{n=1}^{J}\left|{\frac{G_i(x_j)-G^{k,\delta}_i(x_j)}{G_i(x_j) }}\right|\times 100\%.
\end{equation}

Similar remark holds for other norms, e.g., for $\Gamma=[0,T]\times[0,L]$, ${\|f\|}_{L^2(\Gamma)}$ is to be replaced by ${\|f\|}_{l^2(\Gamma)}$, where 
\begin{equation}\label{e:l2}
{\|f\|}_{l^2(\Gamma)}^2
=\frac TN\frac LJ\sum_{(t_n,x_j)\in \Gamma} {|f (t_n,x_j)|}^2.
\end{equation}
\end{remark}

\begin{remark}
	\emph{The mean absolute percentage error}, for  \eqref{equ18} and $\Gamma=[0,T]\times[0,L]$, is
\begin{equation}\label{equa23}
{\Error}_{V}=\frac TN\frac LJ
\sum_{n=1}^{J}\sum_{j=1}^N\left|{\frac{V(t_n,x_j)-\mu_{{V^\delta|}_{\Gamma} } (t_n,x_j)}{V(t_n,x_j) }}\right|\times 100\%,
\end{equation}
and, for \eqref{equ18} and $\Gamma=[0,T]\times\{0,L\}$, is
\begin{equation}\label{equa24}
{\Error}_{V}=\frac 12\frac TN\sum_{j=1}^N\left[\left|{\frac{V(t_n,1)-\mu_{{V^\delta|}_{\Gamma} } (t_n,1)}{V(t_n,1) }}\right|+\left|{\frac{V(t_n,J)-\mu_{{V^\delta|}_{\Gamma} } (t_n,J)}{V(t_n,J) }}\right|\right]\times 100\%.
\end{equation}
\end{remark}

We present four numerical tests. In the first three examples the geometry is defined by a segment, and in the fourth example it is given by a branched tree. The first example considers only one ion $(\Ion=\{\K\})$, with $\bG(x)=G_{\K}(x)$ depending only on the spatial variable, and the voltage is known at $\Gamma=[0,T]\times\{0,L\}$, i.e., at all times but only at the endpoints. In the second example, still with one ion $(\Ion=\{\K\})$, the conductance depends on the temporal and spatial variables $(t,x)$ and measured voltage is known at $\Gamma=[0,T]\times[0,L]$, i.e., all the time and at all points. In the third example, we consider two ions $(\Ion = \{\K,\Na\})$, where $\bG(x)=(G_\K(x), G_\Na(x))$ depends only on the spatial variable and the data is again known at $\Gamma=[0,T]\times[0,L]$, i.e., all the time for all points. Finally, in the fourth example we consider the case where the geometry is defined by a tree, with the conductance being time independent under the presence of one ion, and the voltage data being known at all the time and all the points. 

For all numerical examples let $C_M=1\;(\mu F/cm^2)$, $r_a=0.0238 \;(cm)$, $R=34.5 \;(\Omega cm)$, $G_L=0.3\; (m S/cm^2)$ and $E_L=10.613\;(mV)$; see \cite[page 69]{tuckwell1988}, \cite[page 4]{kashef1974} and \cite[page 586]{cooley1966}.  The following Neumann boundary conditions
\begin{equation*}
V_x(t,0)=p(t)=-\frac{RI(t)}{\pi {r_a}^2},\;\;\;V_x(t,L)=q(t)=0,
\end{equation*}
correspond to injecting a current $I(t)$ at $x=0$, and imposing a sealed end condition at $x=L$; see~\cite{cox2006,cox2001-1,mascagni1990,mascagni1989}. 
We consider the current $I(t)=0.1t^2 \exp(-10t)\;\; (m A)$ and time $T=20 \;(ms)$, see~\cite[page 101]{cox2001-1}.   We assume that initial condition $V(0,x)=r(x)=0\; (mV)$, see~\cite{cox2006,cox2001-1}. As previously mentioned, we consider $M=50$ experiment for each additive-multiplicative noise~\eqref{equ22}, where we set $\Delta=25\%$, $5\%$, $1\%$ and $0.2\%$~\cite{bell2005,cox2006,tadi2002}. For stopping criterion~\eqref{equ7}, we choose $\tau=1.01$. Finally, we set the initial guesses to be zero in the iterative scheme, and thus the initial relative error is $100 \%$.

In Examples~\ref{Exa3.1},~\ref{Exa3.2} and~\ref{Exa3.3}, we consider $L=0.1\;(cm)$, see \cite[page 145]{cox2006}. We solve equations~\eqref{equ3} and~\eqref{equ10} via finite differences in space, with $\Delta x=0.001\;(cm)$, and in time, with $\Delta t=0.2\;(ms)$. 

For Example~\ref{Exa3.4}, the length of the edges are: $|e_1|=|e_2|=0.1\; (cm)$ and $|e_3|=0.2\;(cm)$. The values of the other parameters are the same as above. In this example, we solve the differential equations~\eqref{equ20} and~\eqref{equ21} again using finite differences, but with explicit Euler in time, and $\Delta x=\Delta t=0.01$.

\Example \label{Exa3.1}
Consider a particular instance from Eq.~\eqref{equ3}, where $N_\ion=1$ $(\Ion=\{\K\})$, $E_\K=-12\;(mV)$ and  $G_i(t,x)=G_\K(x)$.  The goal is to estimate 
$$G_{\K}(x)=0.2+0.2/\left(\;1+\exp(\;(\;0.1/2-x\;)/0.01\;)\;\right)\;(mS/cm^2)$$
 given $V^\delta|_{\Gamma}=\big\{\bigl(V^\delta(t,0),\;V^\delta(t,L)\bigr):\,t\in[0,20]\big\}$. See~\cite{cox2006}. 

In Table~\ref{t:ex1} we present the results for various levels of noise.  At each line of the table the noise is reduced by a factor of five, and that leads to a similar reduction of the residual. The same cannot be stated about the approximation error, exposing the instability of the problem. 

In Figures~\ref{Ex1-fig1} and $\ref{Ex1-fig2}$, we plot results for $\Delta=1\%$ of noise  with  $M=50$ experiments (see Table~\ref{t:ex1}, line 4). In Figure~\ref{Ex1-fig1}, we display the exact membrane potential, the mean and  standard deviation of the voltage measurement (see Eq.~\eqref{equa18}, for $f={V^\delta|}_{\Gamma}$),  and the difference between the exact membrane potential and mean of its measurements. In Figure~\ref{Ex1-fig2}, we show the exact conductance, the mean and standard deviation of the approximate solutions (see equation \eqref{equa18}, for $f=\bG^{k_*,\delta}$), and the difference between the conductance and  mean of the approximate solutions. We also consider the higher noise level $\Delta=5\%$ and present the related figures in the Online Resource. 


\begin{table}[h!]
\small
\begin{tabular}{|l|l|l|c|c|c|c|c|c|c|}\hline
		$\Delta$  & ${\Error}_{\bG}$ & ${\Error}_V$ \\\hline  
$25\%$    & 2.0387 \%        & 26.4003\%    \\\hline
$5\%$     & 0.7738 \%        & 4.7587 \%    \\\hline
$1\%$     & 0.3306 \%        & 0.9567 \%    \\\hline
$0.2\%$   & 0.2034 \%        & 0.1893 \%    \\\hline
\end{tabular}
\caption{\footnotesize {Numerical results for Example~\ref{Exa3.1} with $M=50$ experiments for each noise level $\Delta$. The first column describes the noise level $\Delta$, as in  Eq.~\eqref{equ22}. The second column contains the mean errors according to Eq.~\eqref{equa21}. Finally, the third column contains the mean errors according to Eq. ~\eqref{equa24}.
}}
	\label{t:ex1}
\end{table}
\begin{figure}
	\centering
	\includegraphics[ height=5cm, width=14.cm]{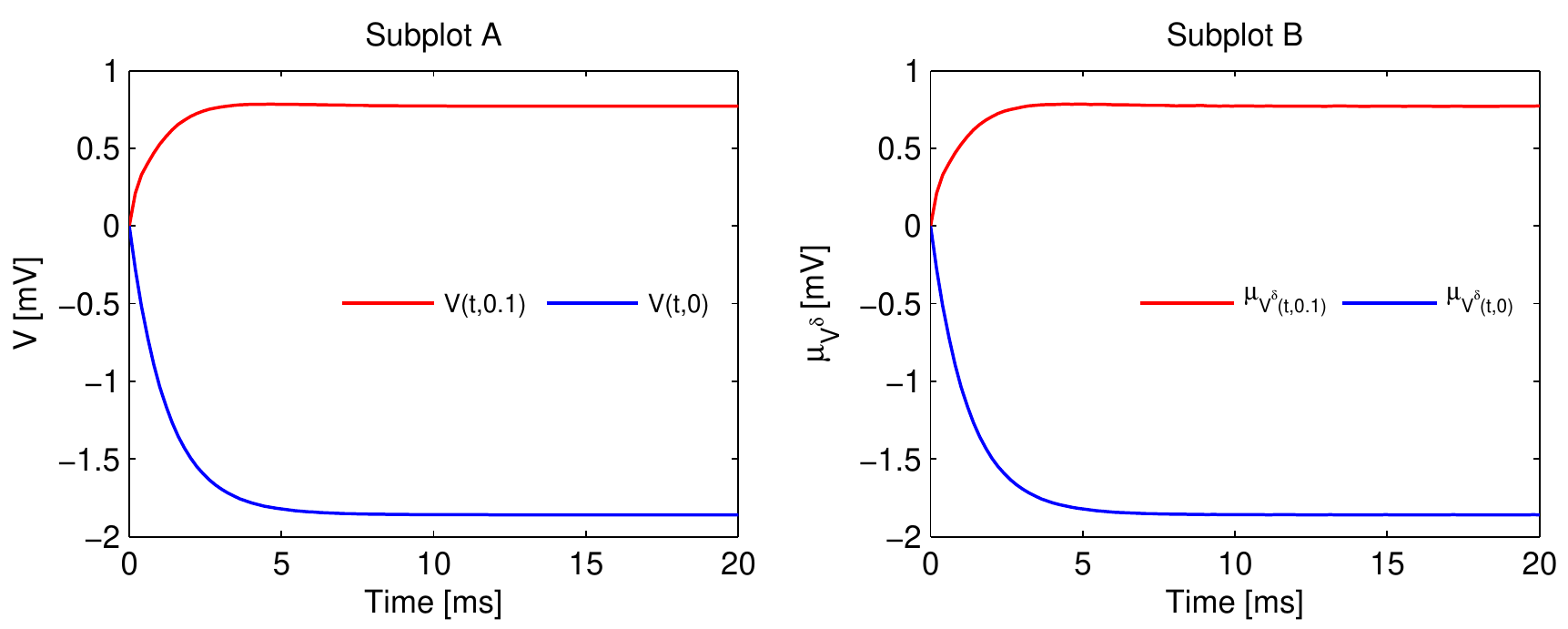}
	\includegraphics[ height=5cm, width=14.cm]{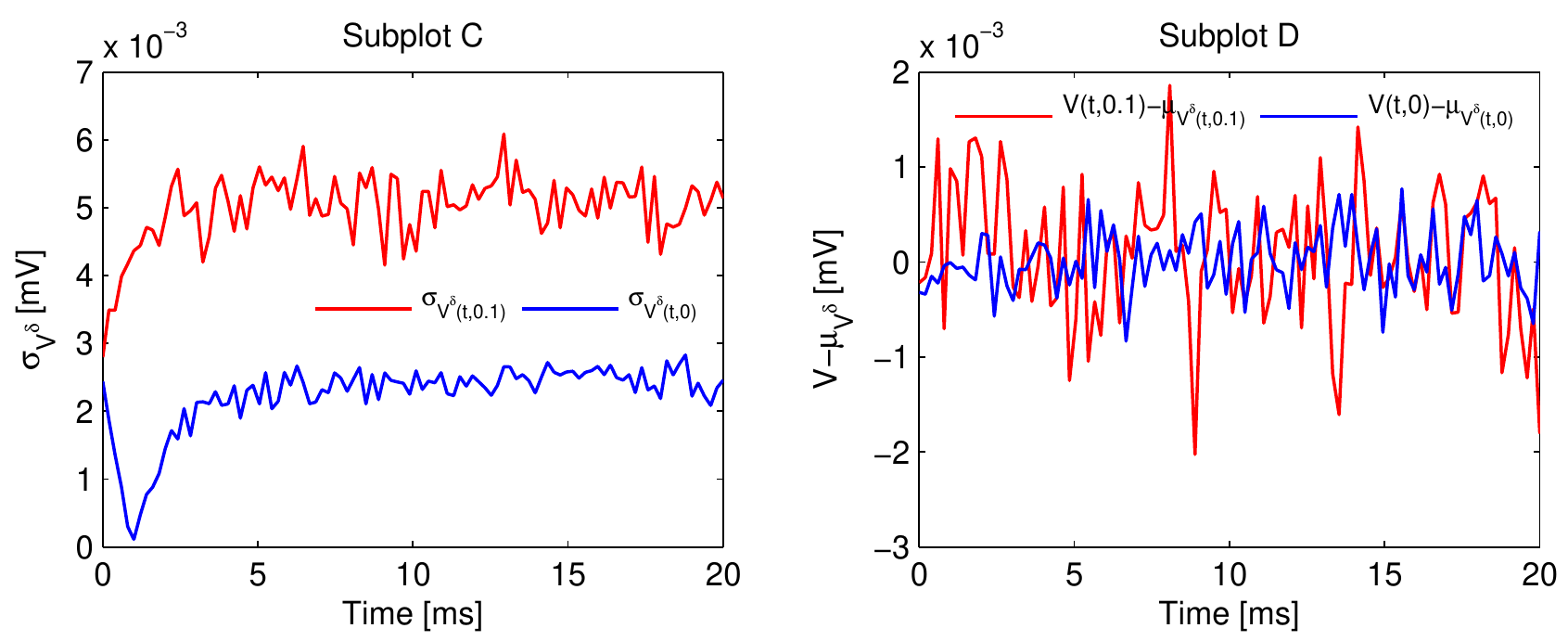}
	\caption{\footnotesize {Result for Example~\ref{Exa3.1} with $\Delta=1\%$ noise. Subplot A presents the exact, deterministic, membrane potential at the end-points. Subplots B and C show the mean and standard deviation of the one hundred membrane potential measurements, respectively. In Subplot D displays the difference between the exact membrane potential and the mean of its measurements.} } 
	\label{Ex1-fig1}
\end{figure}

\begin{figure}
	\centering
	\includegraphics[ height=5cm, width=14.cm]{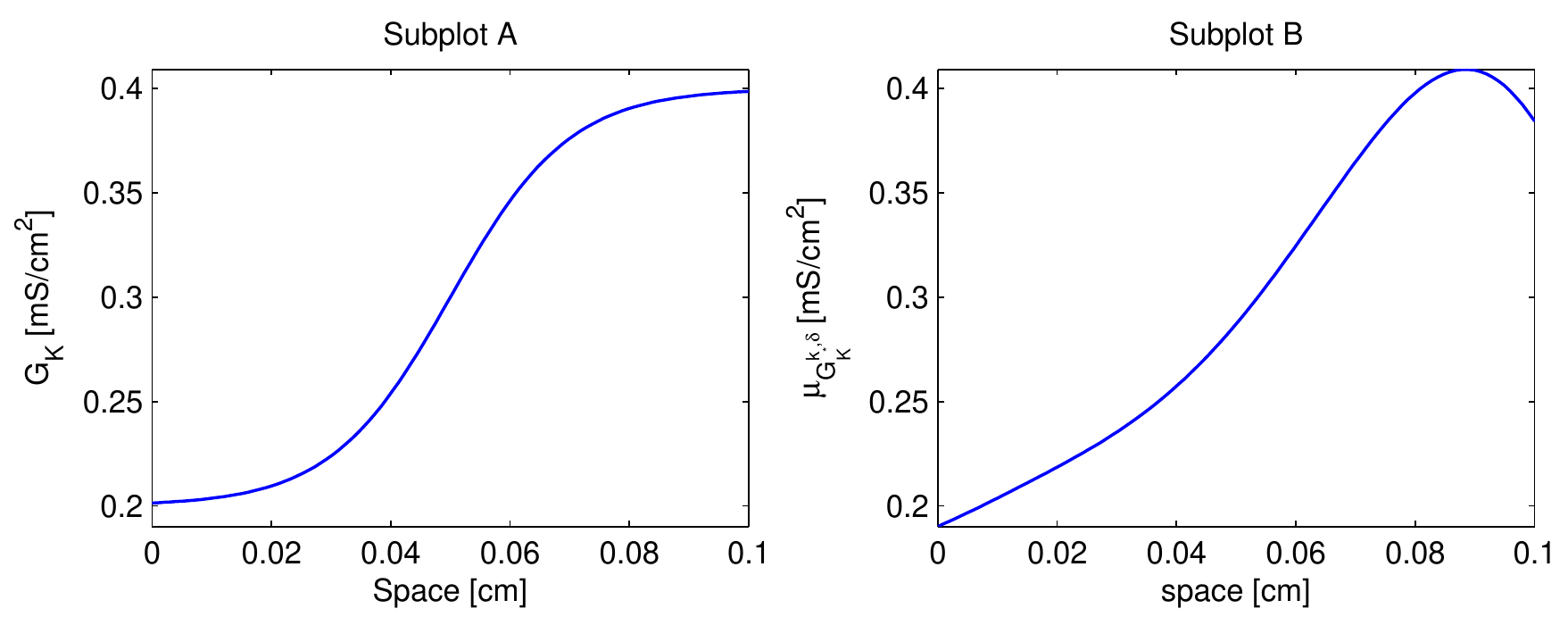}
	\includegraphics[ height=5cm, width=14.cm]{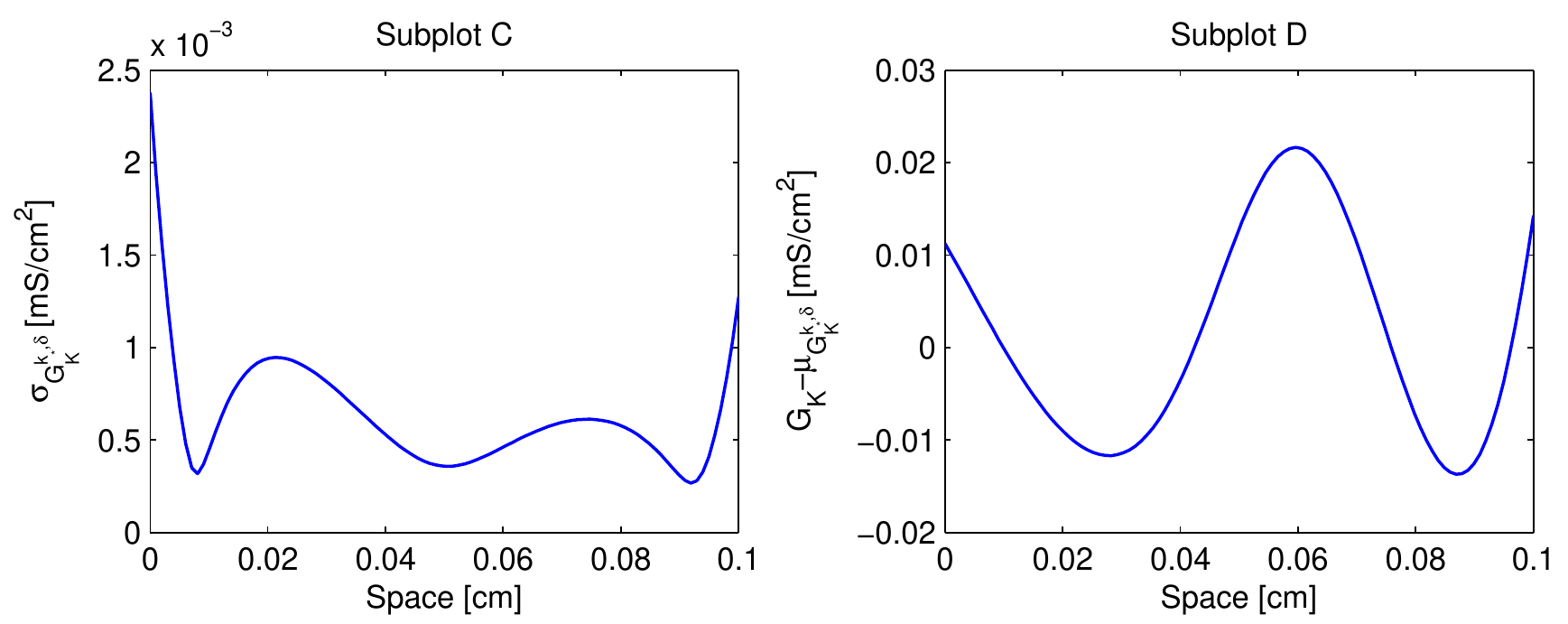}
	\caption{{\footnotesize For Example~\ref{Exa3.1} with $\Delta=1\%$. Subplot A shows the exact solution $G_{\K}$. In subplots B and C present the mean and standard deviation of the one hundred approximate solutions, respectively. Subplot D displays the difference between the exact solution and mean of the approximate solutions.} }
	\label{Ex1-fig2}
\end{figure}

\Example\label{Exa3.2} We consider conductance as depending on time and space, where $N_\ion=1$ $(\Ion=\{\K\})$,  $E_{\K}=-12\;(mV)$, $G_i(t,x)=G_\K(t,x)$.  The goal is to estimate 
\[
G_{\K}(t,x)=0.2+0.2/\left(\;1+\exp(\;(\;0.1/2-x\;)/0.01\;)\;\right)+t+1 \;(mS/cm^2)
\]
given $V^\delta|_{\Gamma}=\big\{V^\delta(t,x):\,(t,x)\in [0,20]\times[0,0.1]\big\}$.

This example is harder than the previous one since now the conductance depends on both time and space. In Table~\ref{t:ex2} we present the results for various levels of noise, and the same comments of Example~\ref{Exa3.1} apply. In Figures \ref{Ex2-fig1} and \ref{Ex2-fig2}, we plot numerical results for $\Delta=1\%$ of noise  with $M=50$ experiments (see Table~\ref{t:ex2}, line 4). Observe that the data for both $V^\delta|_{\Gamma}$ and $G_K$ depend on time and space. See also our Online Resource for experiments with $\Delta=5\%$.

\begin{table}[h!]
	\small
\begin{tabular}{|l|l|l|c|c|c|c|c|c|c|}\hline
	$\Delta$  & $\Error_\bG$  & $\Error_{V}$  \\\hline  
$25\%$    & 17.7328 \%    & 4.2053 \%     \\\hline
$5\%$     &  2.3030 \%    & 0.6823 \%     \\\hline
$1\%$     &  0.5655 \%    & 0.1312 \%     \\\hline
$0.2\%$   &  0.2061 \%    & 0.0309 \%     \\\hline
\end{tabular}

	\caption{\footnotesize {Numerical results for Example~\ref{Exa3.2} with $M=50$ experiments for each noise level $\Delta$. The first column describes the noise level $\Delta$, as in  Eq.~\eqref{equ22}. The second column contains the mean errors according to Eq.~\eqref{equa20}. Finally, the third column contains the mean errors according to Eq. ~\eqref{equa23}.}}
	\label{t:ex2}
\end{table}

\begin{figure}
	\centering
	\includegraphics[ height=5cm, width=14.cm]{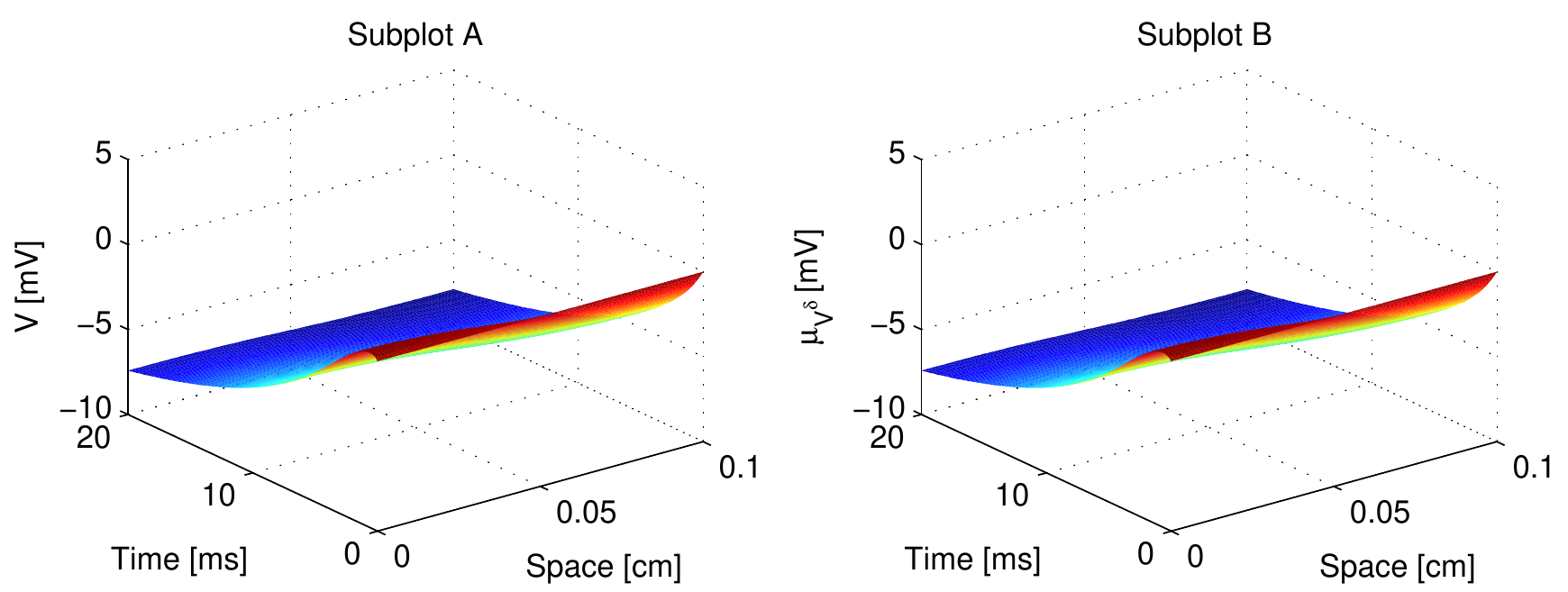}
	\includegraphics[ height=5cm, width=14.cm]{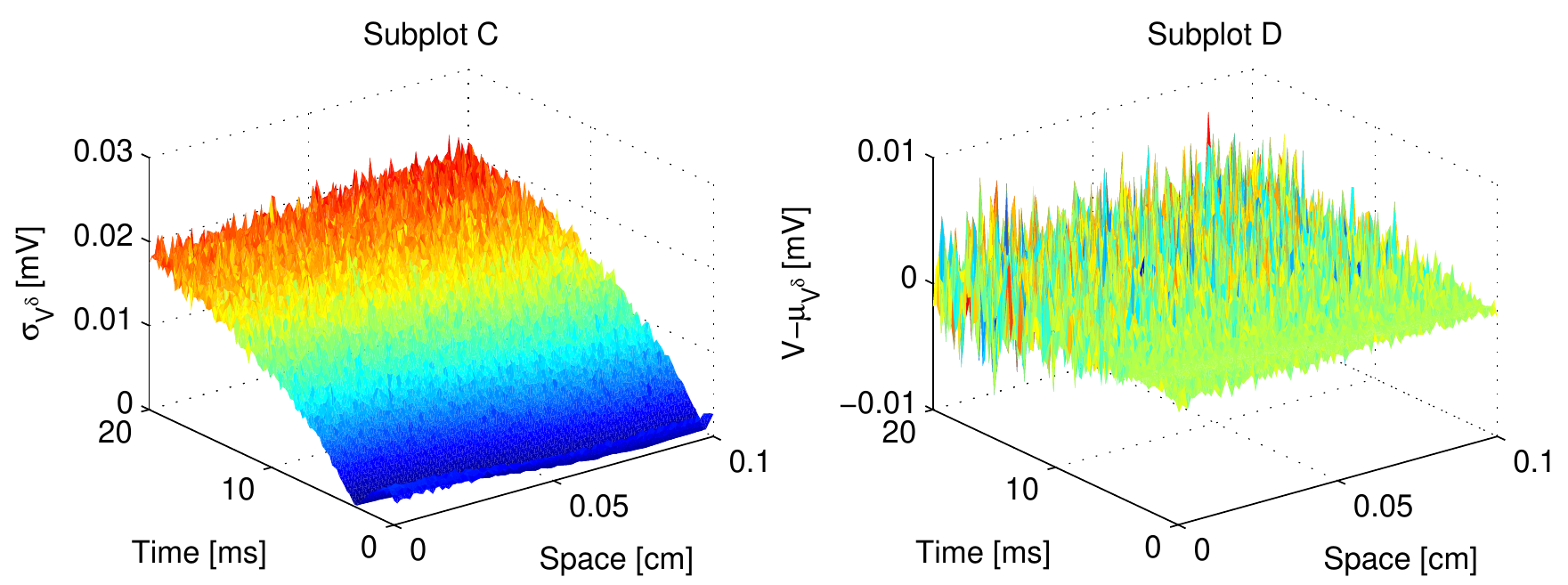}
	\caption{{\footnotesize For Example \ref{Exa3.2} with  $\Delta=1\%$. See Figure~\ref{Ex1-fig1} for the   subplots description.}}
	\label{Ex2-fig1}
\end{figure}   
\begin{figure}
	\centering
	\includegraphics[ height=5cm, width=14.cm]{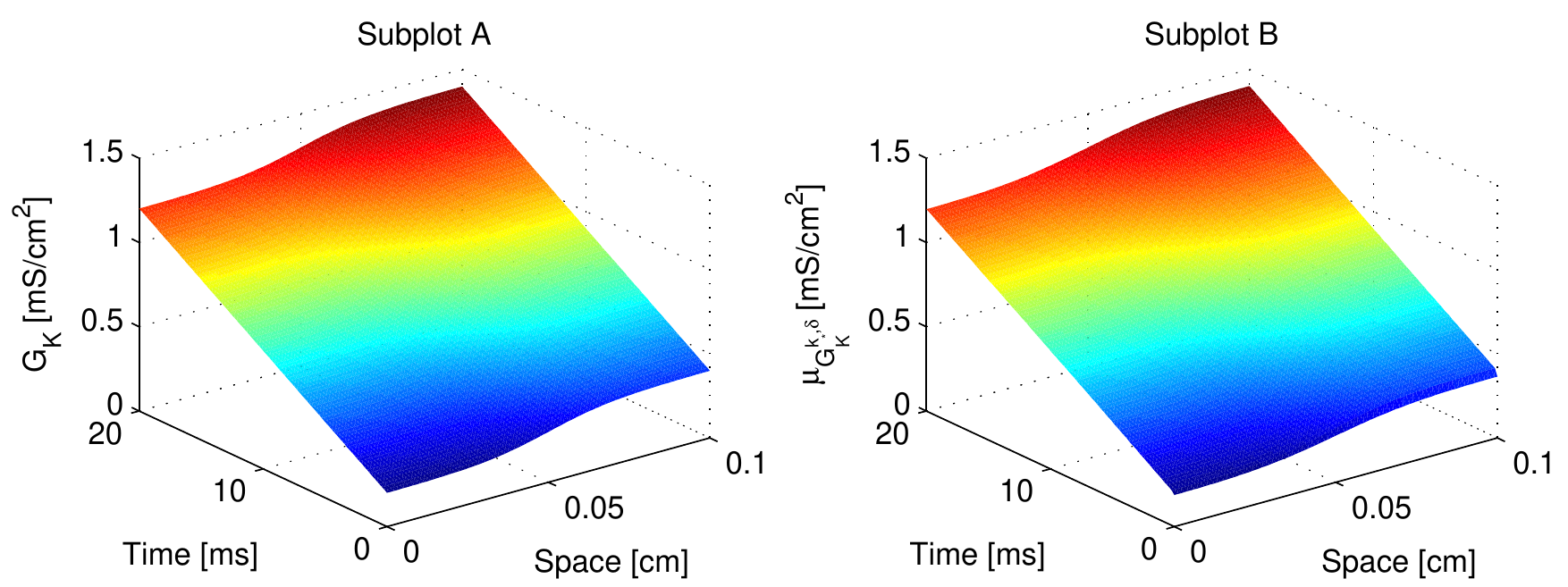}
	\includegraphics[ height=5cm, width=14.cm]{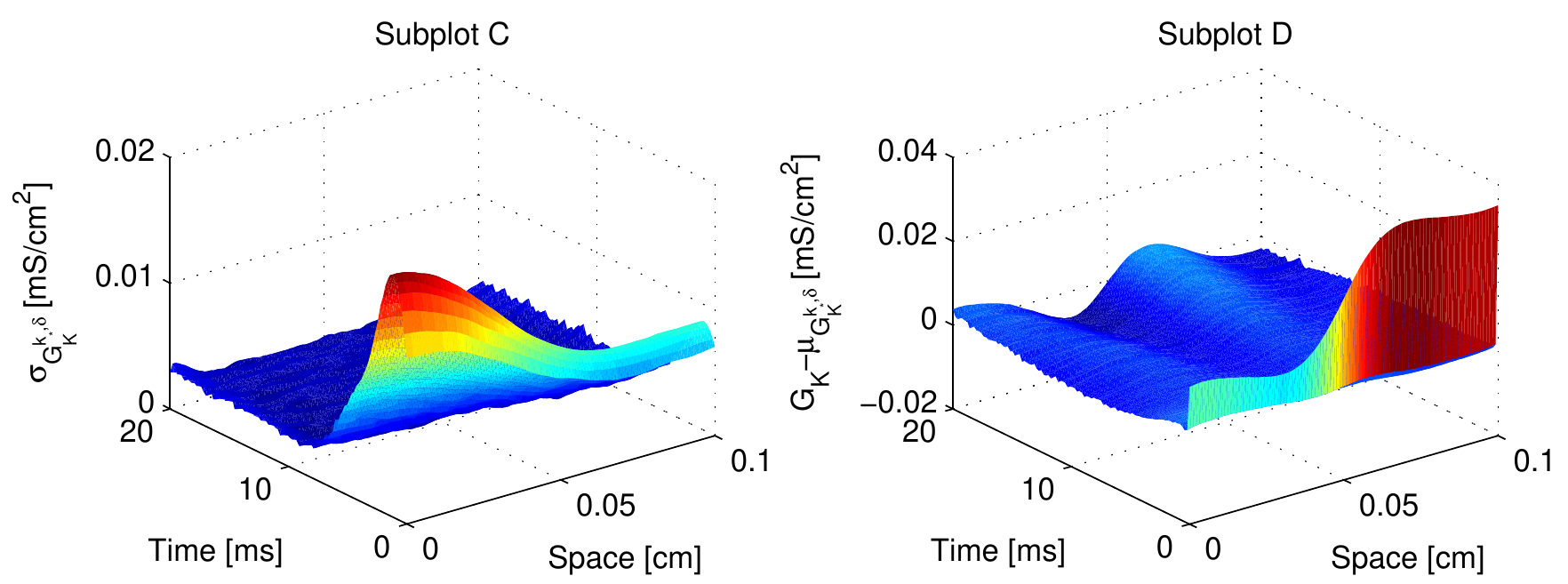}
	\caption{{\footnotesize Plots for Example~\ref{Exa3.2}. See Figure \ref{Ex1-fig2} for the  subplots description.} }
	\label{Ex2-fig2}
\end{figure}

\Example\label{Exa3.3}
Consider now two different ions, $\K$ and $\Na$, where $N_{\ion}=2$ $(\Ion=\{\K, \Na\})$ $E_\K=-12\;[mV]$ and $E_\Na=115\;[mV]$. The goal is to approximate
\[G_\K(x)=0.2+0.2/\left(\;1+\exp(\;(\;0.1/2-x\;)/0.01\;)\;\right)\]\;
 and 
\[\;G_{\Na}(x)=0.1+0.1/\left(\;1+\exp(\;(\;0.1/2-x\;)/0.01\;)\;\right),\]
given $V^\delta|_\Gamma=\big\{V^\delta(t,x):\,(t,x)\in [0,20]\times[0,0.1]\big\}$. 

The extra difficulty in this example lies in the fact that there are two conductance functions to be discovered. In Table~\ref{t:ex3} we present the results for various levels of noise. In Figures~\ref{Ex3-fig1},~\ref{Ex3-fig2} and \ref{Ex3-fig3}, we plot results for $\Delta=1\%$ of noise with $M=50$ experiments (see Table~\ref{t:ex2}, line 3). Note that now there are two conductances, one related to $\K$ and the other to  $\Na$. For results with  $\Delta=5\%$ see the Online Resource.
\begin{table}
	\small
	
\begin{tabular}{|l|l|l|c|c|c|c|c|c|c|}\hline
			$\Delta$  & $\Error_\bG$ & $\Error_V$  \\\hline  
$25\%$    & 8.2571 \%    & 1.7057 \%  \\\hline
$5\%$     & 2.1458 \%    & 2.1458 \%  \\\hline
$1\%$     & 0.5653 \%    & 0.0687 \%  \\\hline
$0.2\%$   & 0.2109 \%    & 0.0136 \%  \\\hline
\end{tabular}
\caption{\footnotesize Numerical results for Example~\ref{Exa3.3} with $M=50$ experiments for each noise level $\Delta$. The first column describes the noise level $\Delta$, as in  Eq.~\eqref{equ22}. The second column contains the mean errors according to Eq.~\eqref{equa21}. Finally, the third column contains the mean errors according to Eq. ~\eqref{equa23}.}
	\label{t:ex3}
\end{table}
\begin{figure}
	\centering
	\includegraphics[ height=5cm, width=14.cm]{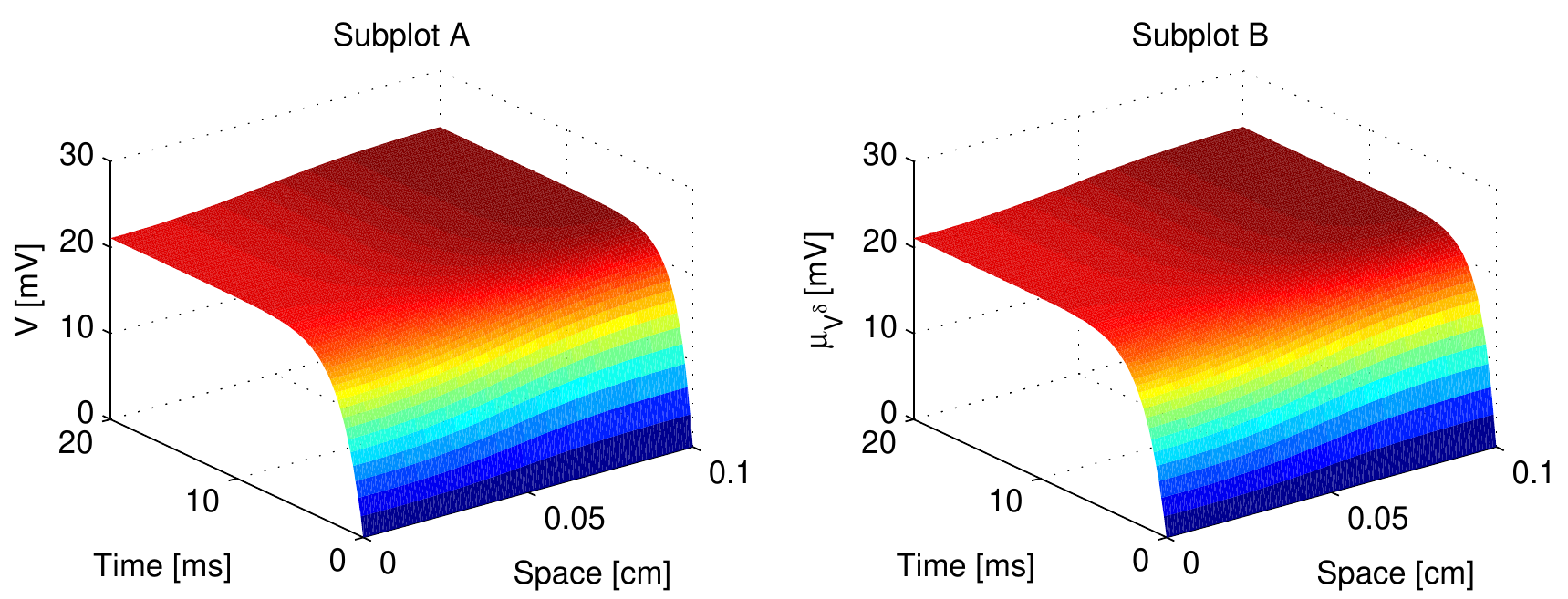}
	\includegraphics[ height=5cm, width=14.cm]{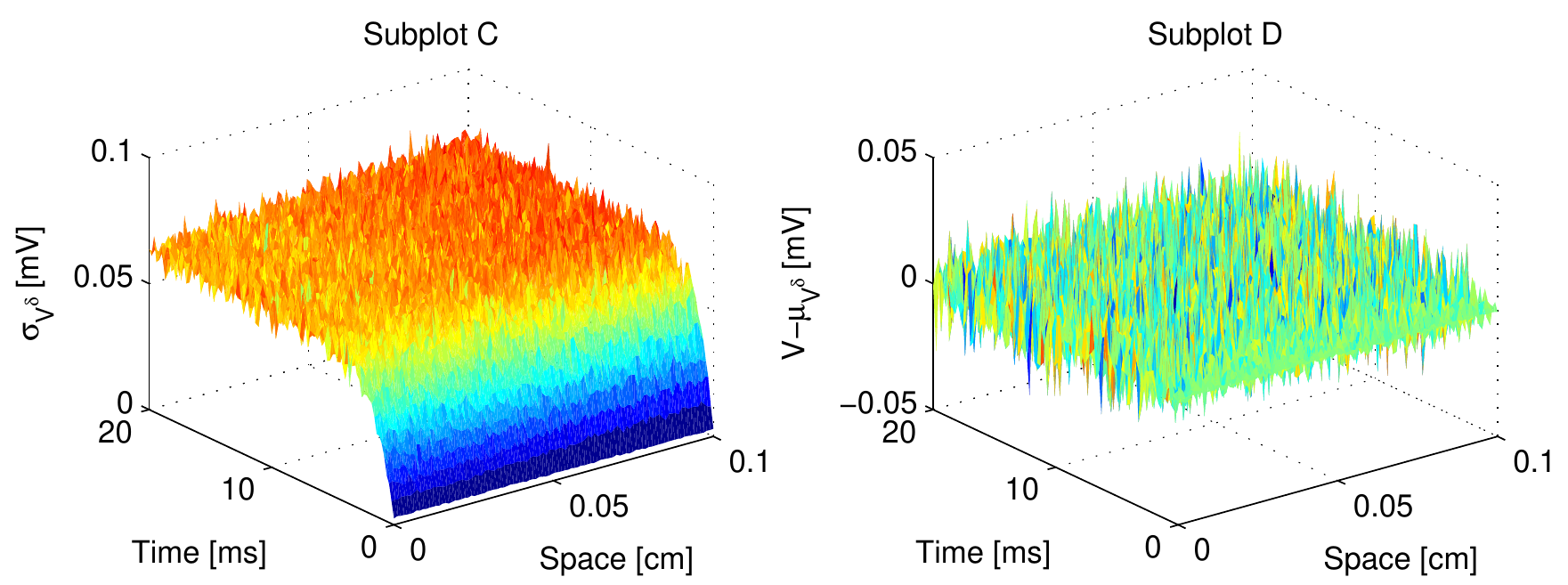}
	
	\caption{{\footnotesize For Example~\ref{Exa3.3} with  $\Delta=1\%$. See Figure \ref{Ex1-fig1} for the   subplots description.}}
	\label{Ex3-fig1}
\end{figure}   
\begin{figure}
	\centering
	\includegraphics[ height=5cm, width=14.cm]{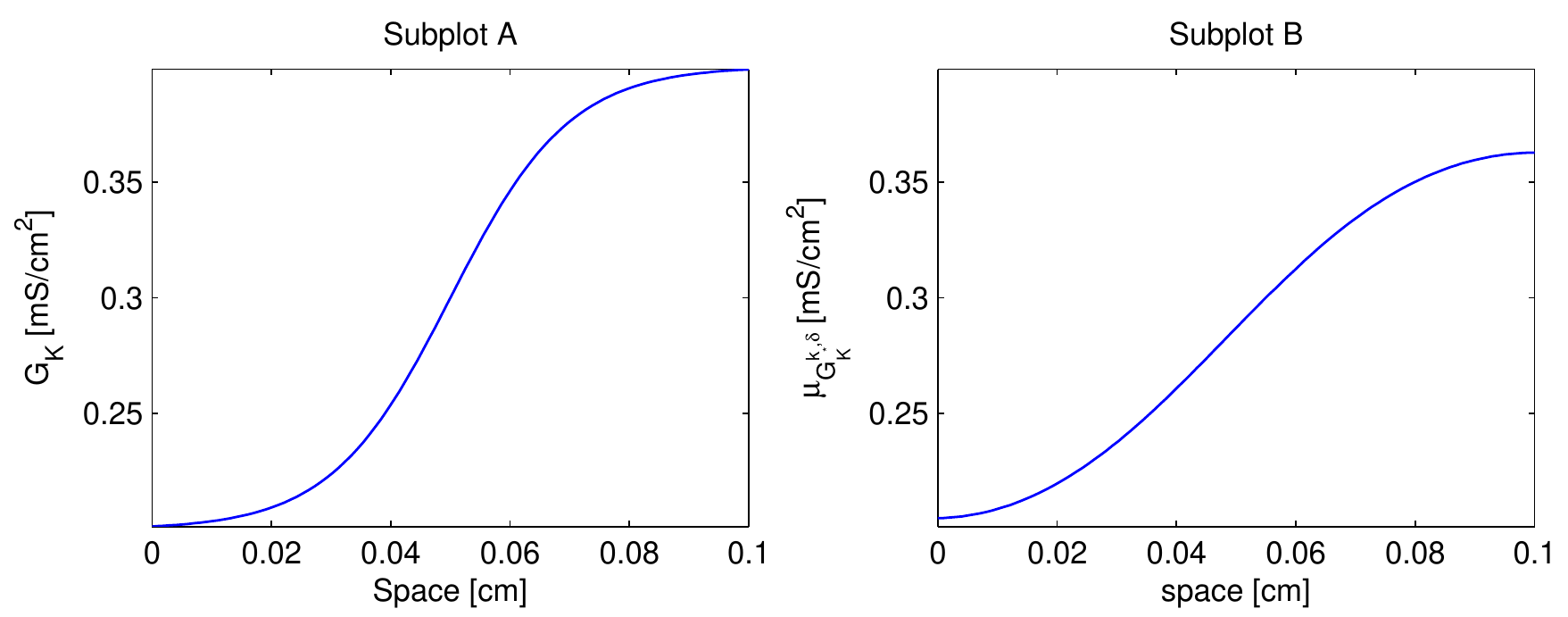}
	\includegraphics[ height=5cm, width=14.cm]{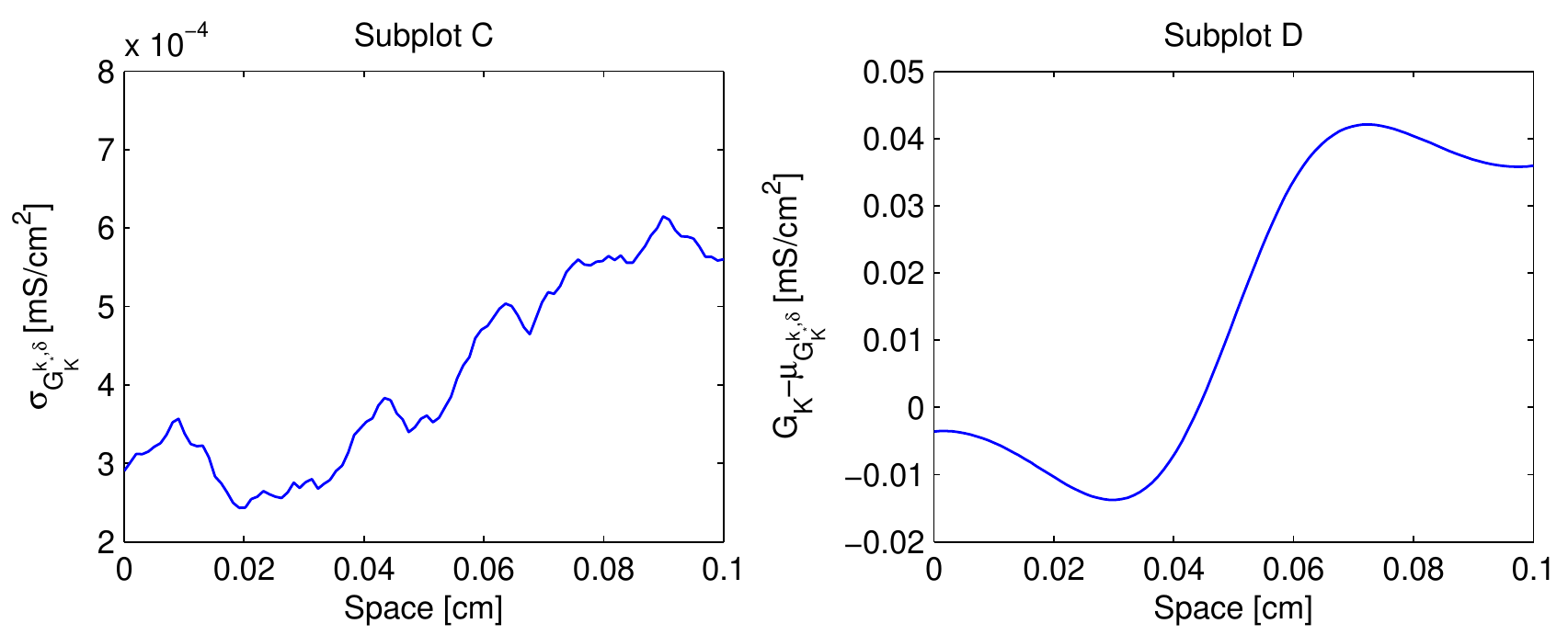}
	\caption{{\footnotesize Results for Example~\ref{Exa3.3} with  $\Delta=1\%$. See Figure \ref{Ex1-fig2} for the   subplots description..} }
	\label{Ex3-fig2}
\end{figure}
\begin{figure}
	\centering
	\includegraphics[ height=5cm, width=14.cm]{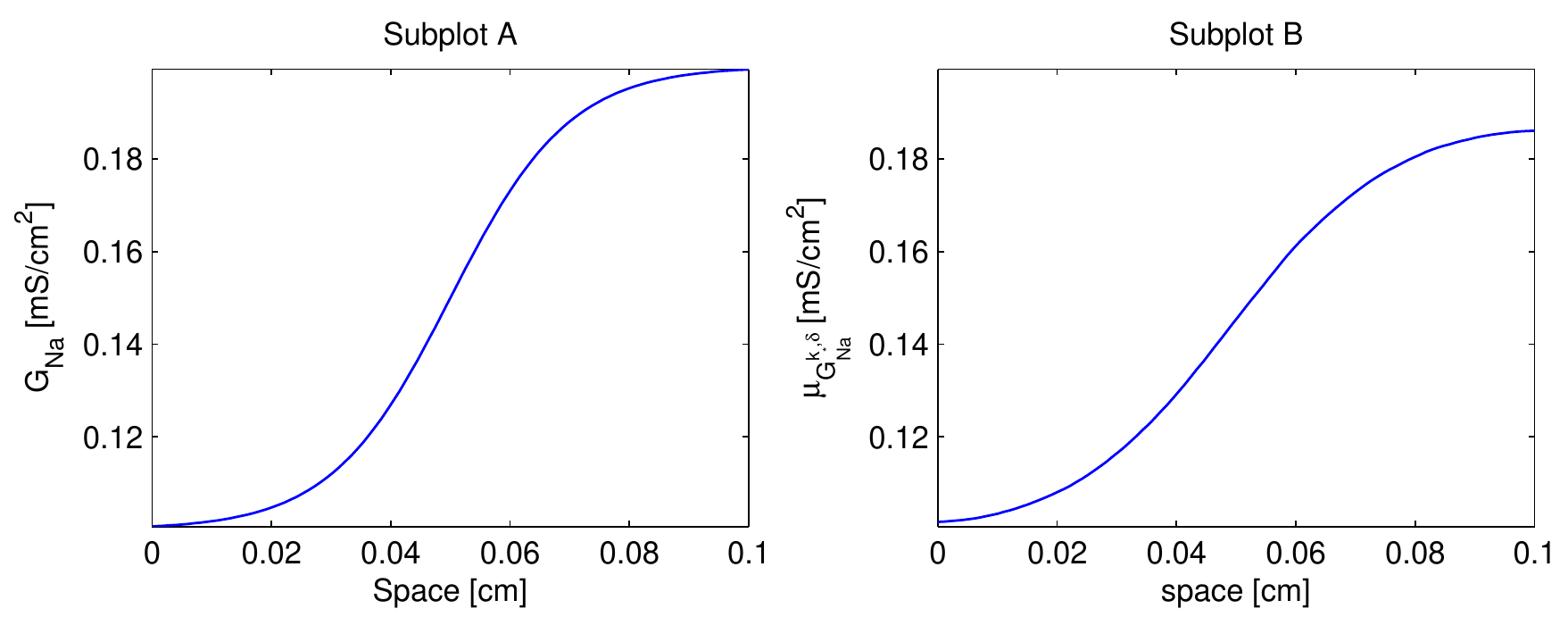}
	\includegraphics[ height=5cm, width=14.cm]{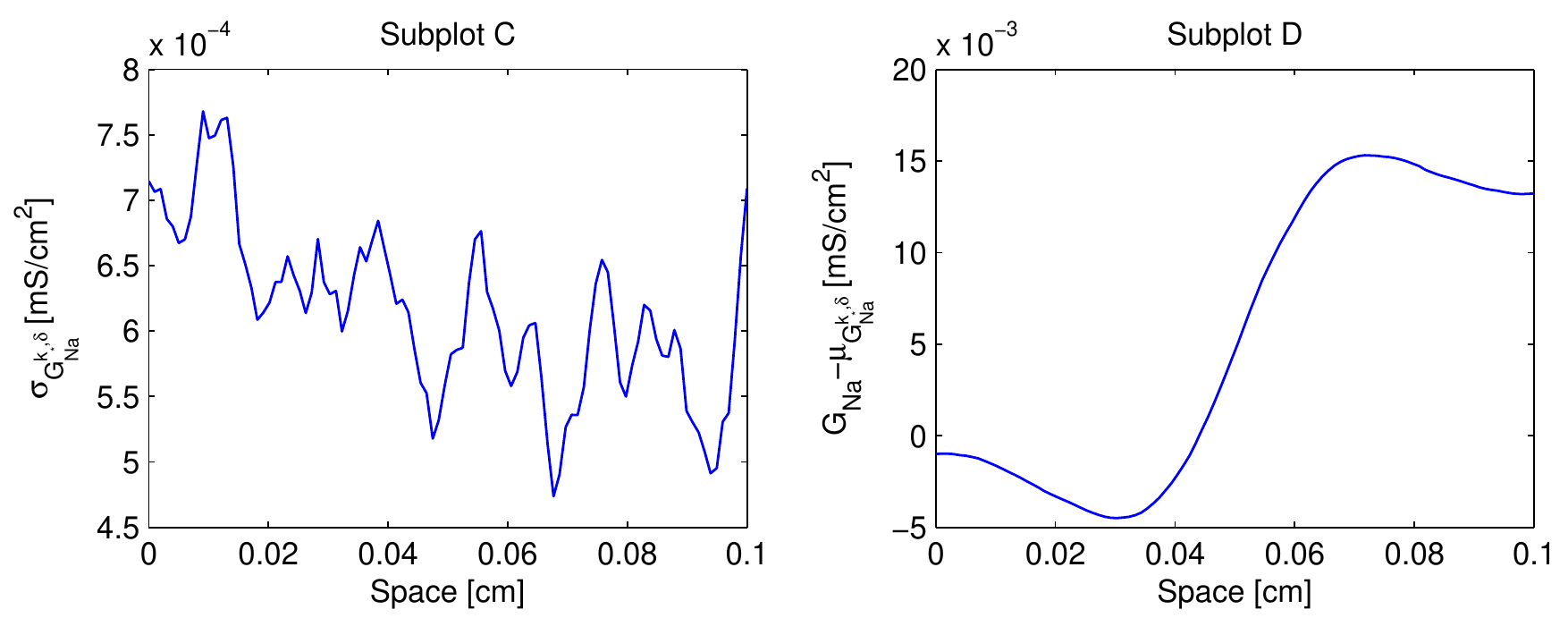}
	\caption{{\footnotesize Results for Example~\ref{Exa3.3} with $\Delta=1\%$ of noise. See Figure \ref{Ex1-fig2} for the subplots description.} }
	\label{Ex3-fig3}
\end{figure}

\Example\label{Exa3.4}
As our final example, we consider the domain defined by a tree, as discussed in Section~\ref{subs2.3}. We consider $N_\ion=1$  $(\Ion=\{\K\})$, $E_{\K}=-12 \;(mV)$ and $G_i(t,x)=G_{\K}(x)$. Given $V^\delta$ in $(0,T)\times\Theta$, the goal is to estimate
\[
G_K(x)=\begin{cases}
0.2+0.2/\left(\; 1+\exp(\; (\;0.1/2-\dist(x,\nu_1)\;)/0.01 \;) \;\right)&\text{if } x\in e_1,
\\
0.2+0.2/\left(\; 1+\exp(\; (\;0.1/2-0.01-\dist(x,\nu_2)\;)/0.01 \;) \;\right)&\text{if } x\in e_2\cup e_3,
\end{cases}
\]
where $\dist(a,b)$ denotes the distance between the points $a$ and $b$. Table~\ref{t:ex4} presents the results for various levels of noise. In figures~{\ref{Ex4-e1fig1}--\ref{Ex4-e3fig2}}, we plot the numerical result for $\Delta=1\%$, and in the Online Resource we plot the results for $\Delta=5\%$.
\begin{table}
	\small
	\begin{tabular}{|l|l|l|c|c|c|c|c|c|c|}\hline
		$\Delta$  & $\Error_\bG$ & $\Error_V$  \\\hline  
$25\%$    & 2.8572 \%    & 3.9757 \%   \\\hline
$5\%$     & 0.7661  \%   & 0.8127 \%   \\\hline
$1\%$     & 0.2926  \%   & 0.1605 \%   \\\hline
$0.2\%$   & 0.1204  \%   & 0.0319 \%   \\\hline
	\end{tabular}
	\caption{\footnotesize Numerical results for Example~\ref{Exa3.4} with $M=50$ experiments for each noise level $\Delta$. The first column describes the noise level $\Delta$, as in  Eq.~\eqref{equ22}. The second column contains the mean errors according to Eq.~\eqref{equa21}. Finally, the third column contains the mean errors according to Eq. ~\eqref{equa23}.}
	\label{t:ex4}
\end{table}
\begin{figure}
	\centering
	\includegraphics[ height=5cm, width=14.cm]{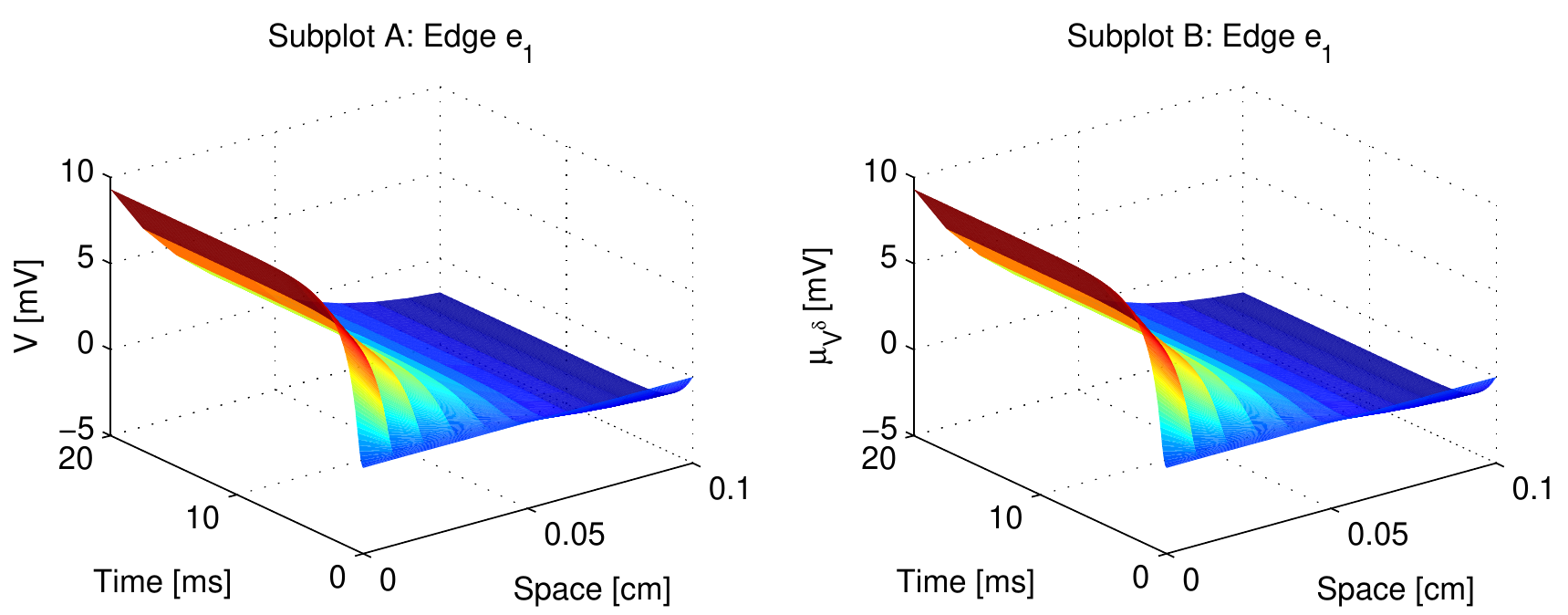}
	\includegraphics[ height=5cm, width=14.cm]{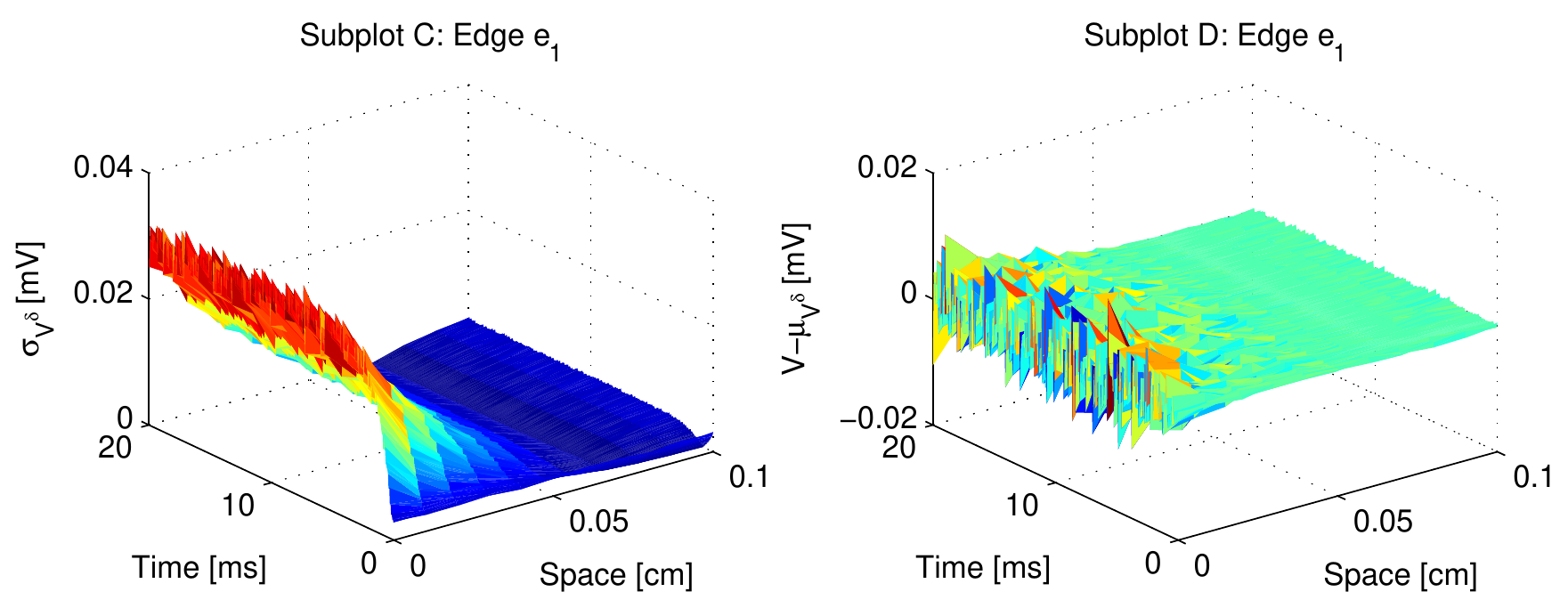}
	\caption{{\footnotesize For Example \ref{Exa3.4} and edge $e_1$, with  $\Delta=1\%$. See Figure \ref{Ex1-fig1} for the   subplots description.}}
	\label{Ex4-e1fig1}
\end{figure}   
\begin{figure}
	\centering
	\includegraphics[ height=5cm, width=14.cm]{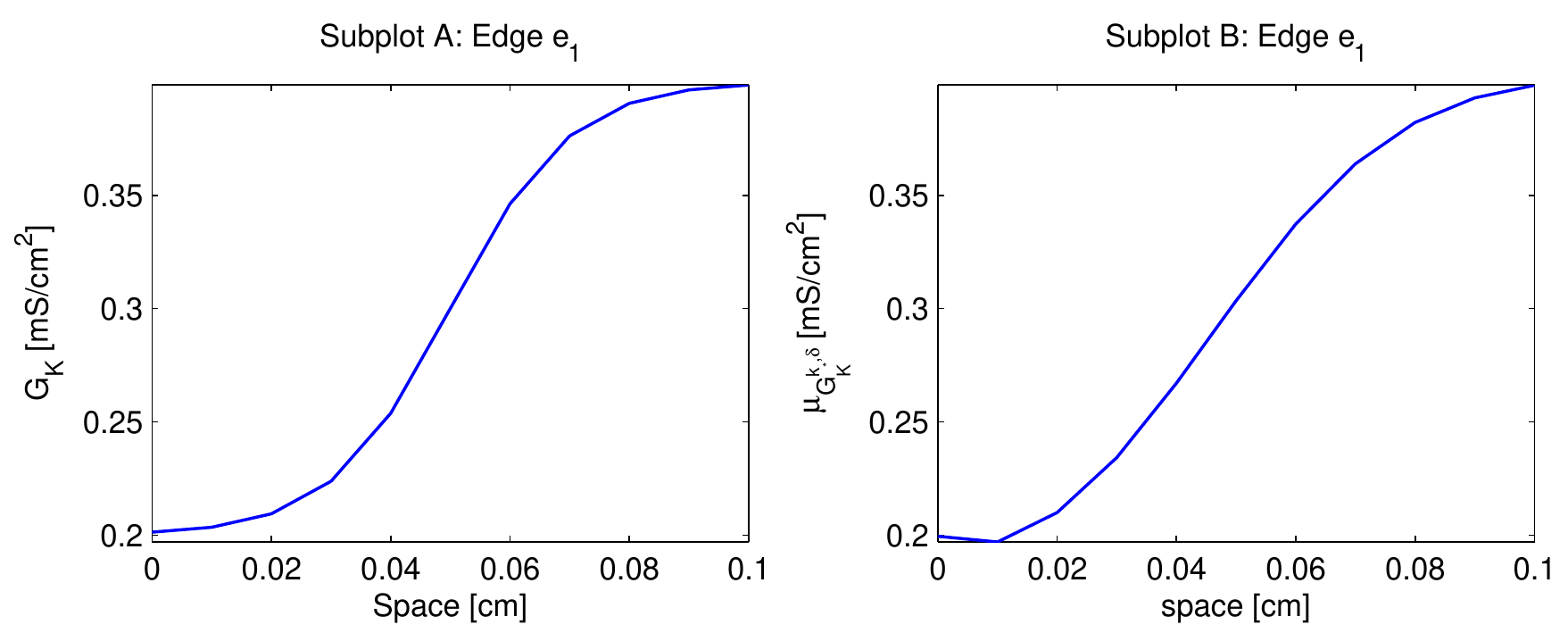}
	\includegraphics[ height=5cm, width=14.cm]{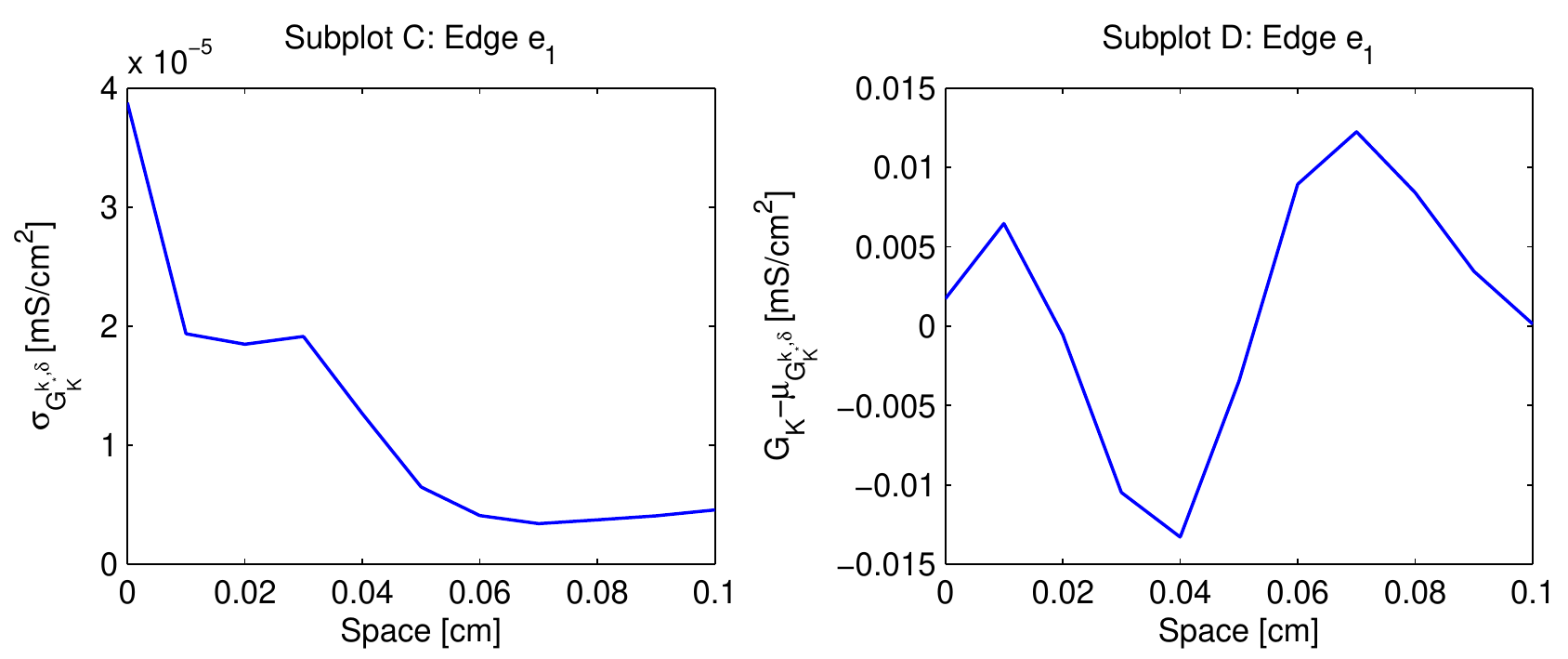}
	\caption{{\footnotesize Plots for Example~\ref{Exa3.4} and edge $e_1$}. See Figure \ref{Ex1-fig2} for the  subplots description.} 
	\label{Ex4-e1fig2}
\end{figure}

\begin{figure}
	\centering
	\includegraphics[ height=5cm, width=14.cm]{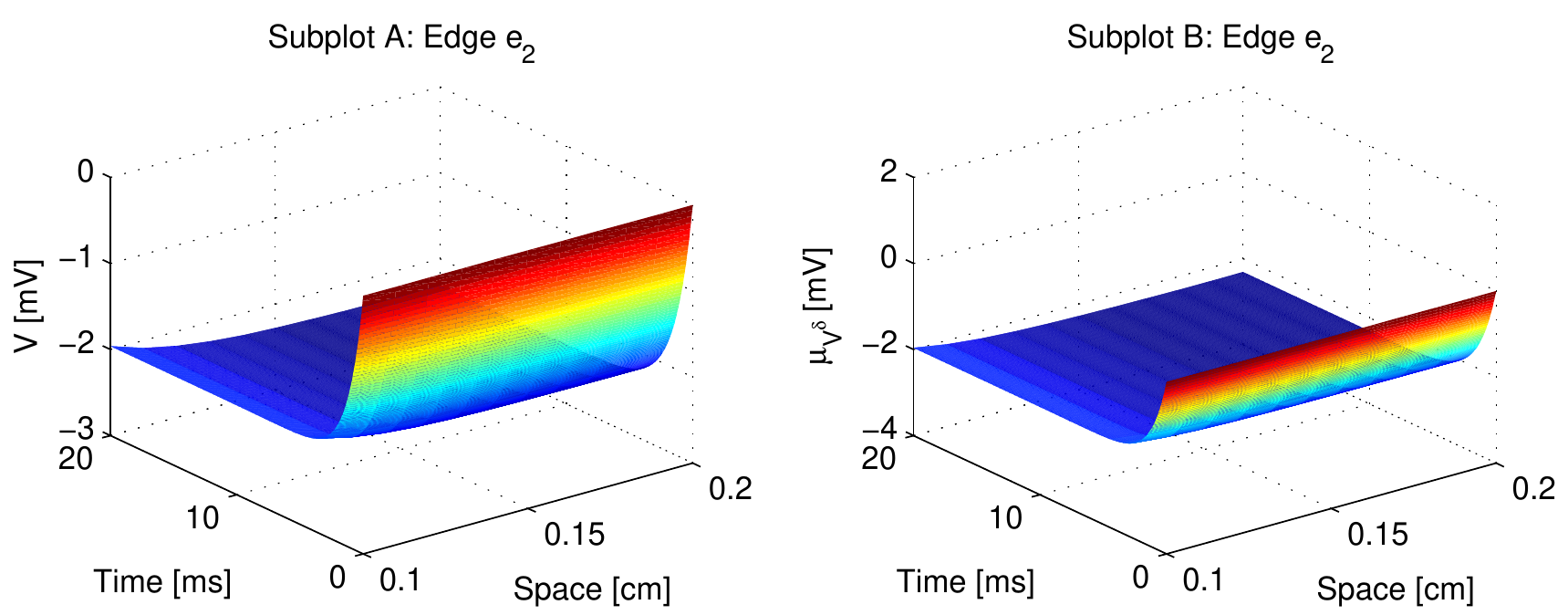}
	\includegraphics[ height=5cm, width=14.cm]{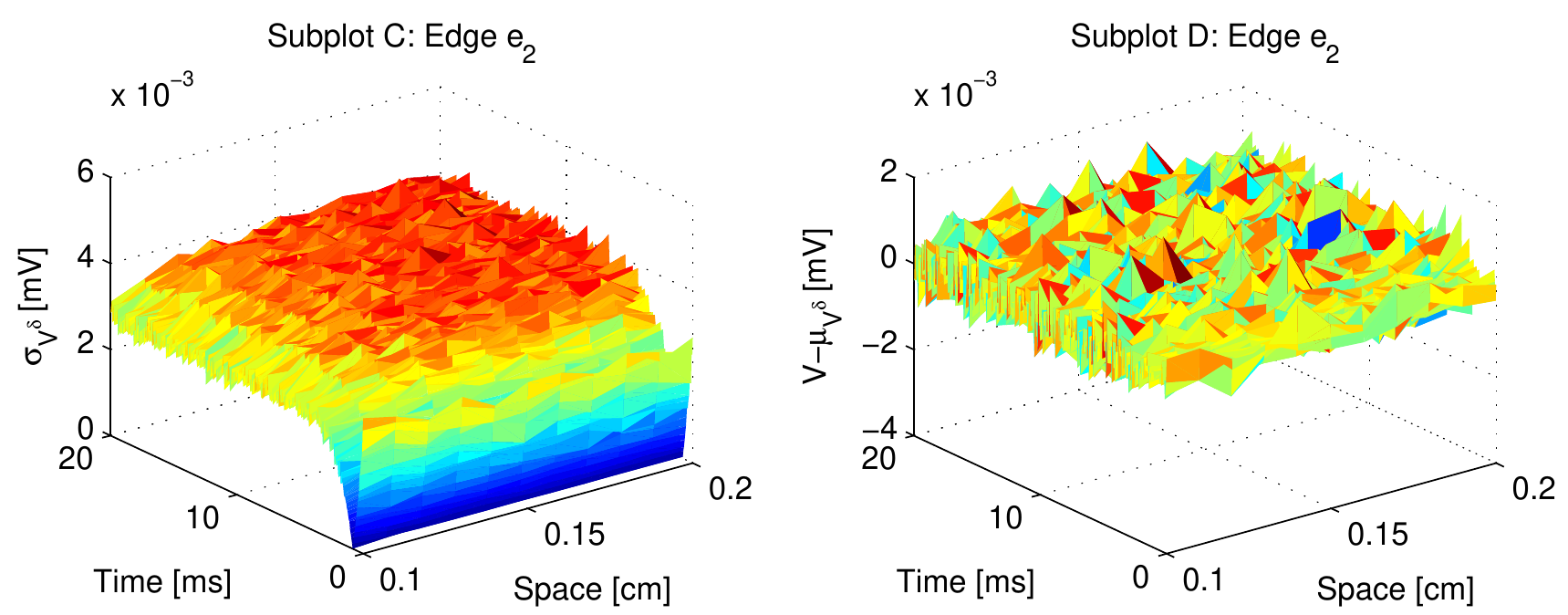}
	\caption{{\footnotesize For Example~\ref{Exa3.4} and edge $e_2$}, with $\Delta=1\%$. See Figure~\ref{Ex1-fig1} for the subplots description.}
	\label{Ex4-e2fig1}
\end{figure}   
\begin{figure}
	\centering
	\includegraphics[ height=5cm, width=14.cm]{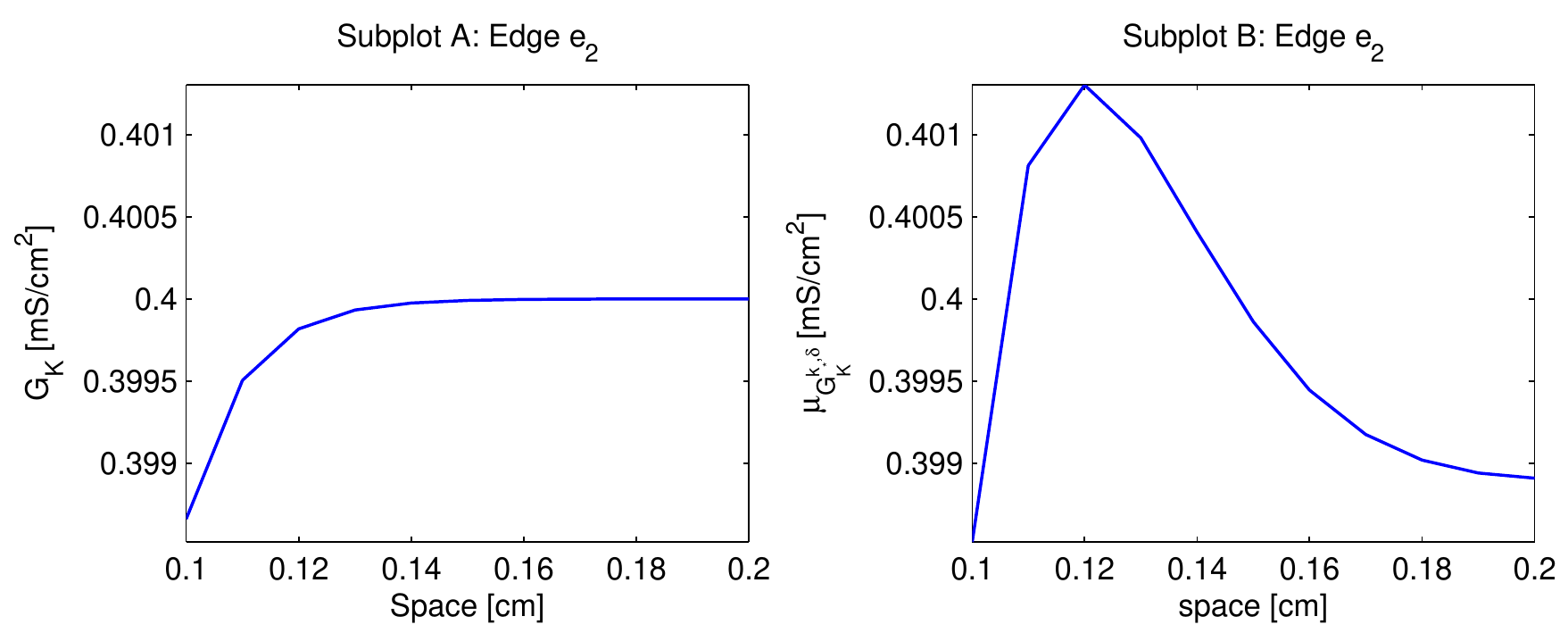}
	\includegraphics[ height=5cm, width=14.cm]{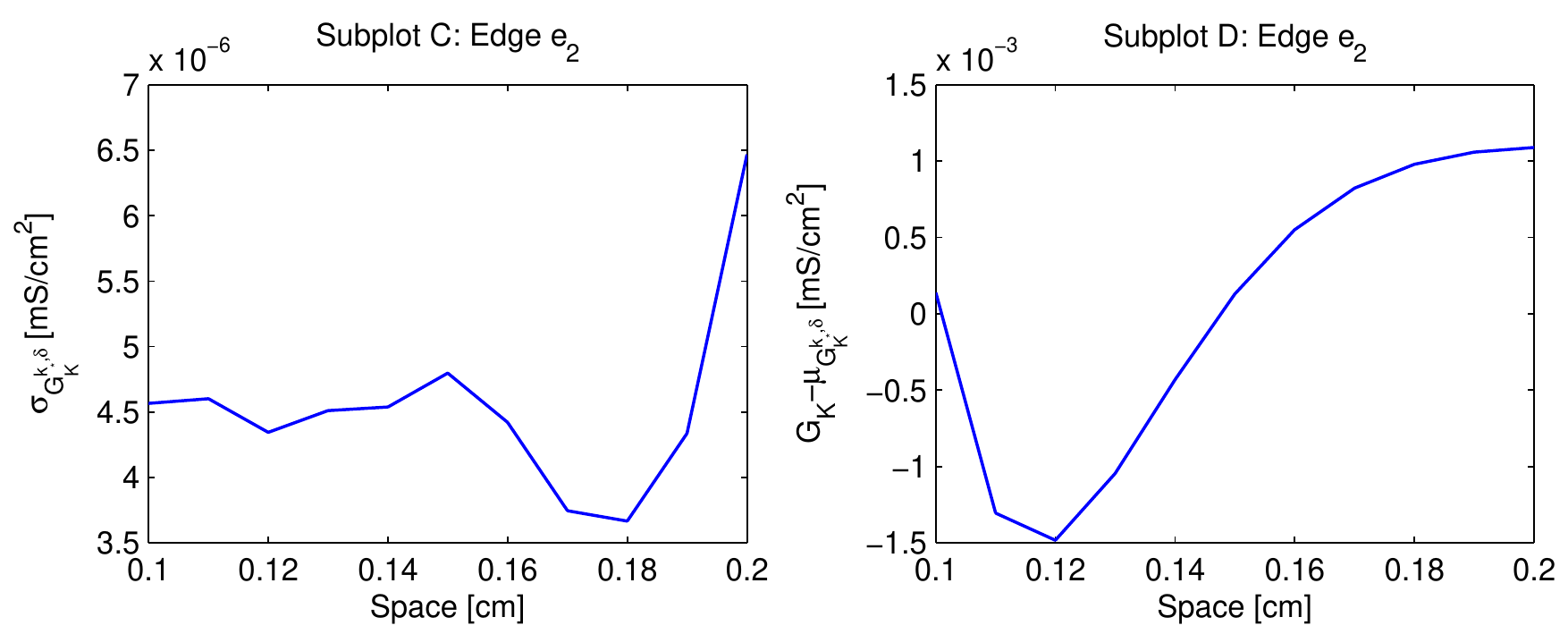}
	\caption{{\footnotesize Plots for Example~\ref{Exa3.4} and edge $e_2$}. See Figure \ref{Ex1-fig2} for the  subplots description.} 
	\label{Ex4-e2fig2}
\end{figure}

\begin{figure}
	\centering
	\includegraphics[ height=5cm, width=14.cm]{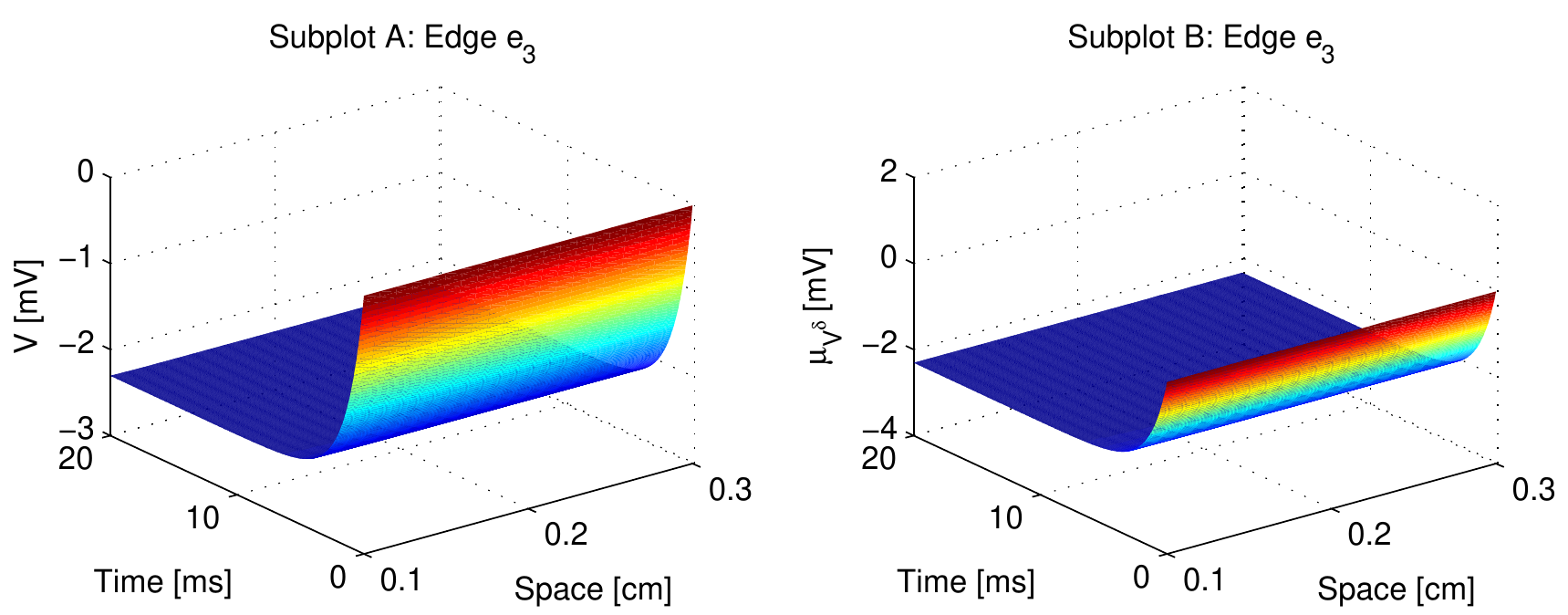}
	\includegraphics[ height=5cm, width=14.cm]{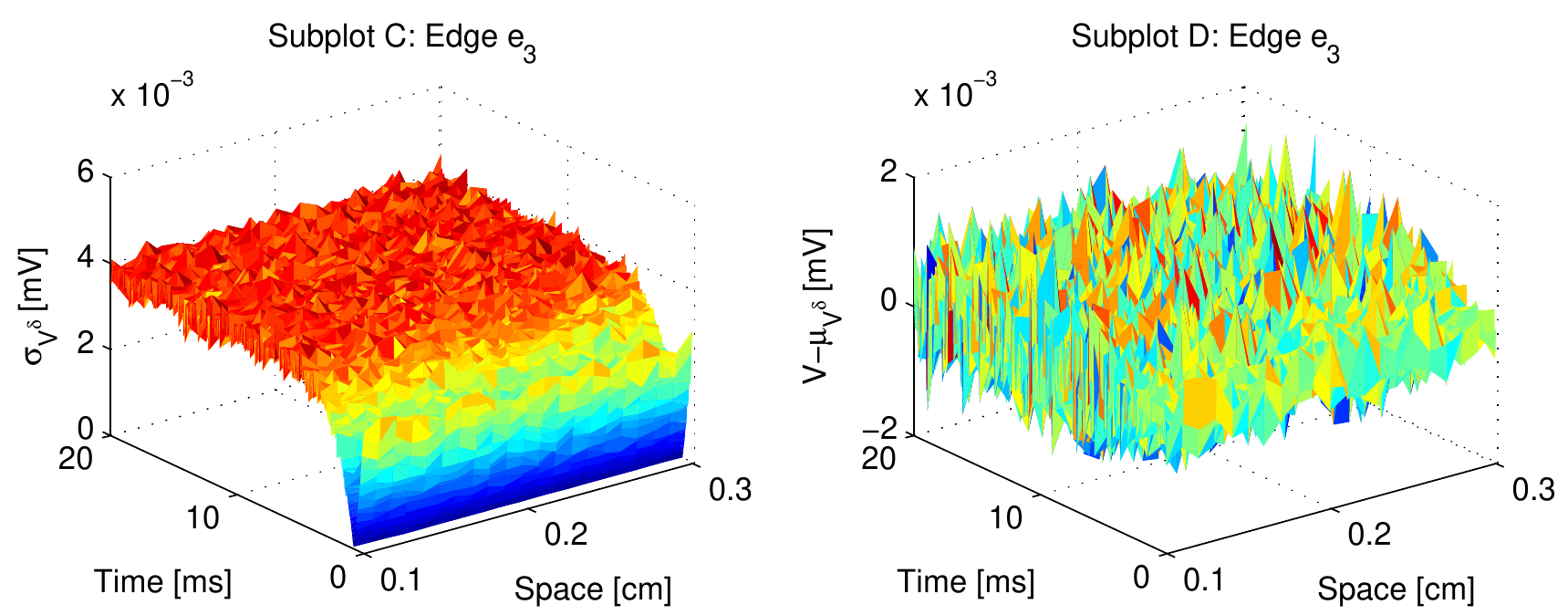}
	\caption{{\footnotesize For Example \ref{Exa3.4} and edge $e_3$}, with $\Delta=1\%$. See Figure~\ref{Ex1-fig1} for the subplots description.}
	\label{Ex4-e3fig1}
\end{figure}   
\begin{figure}
	\centering
	\includegraphics[ height=5cm, width=14.cm]{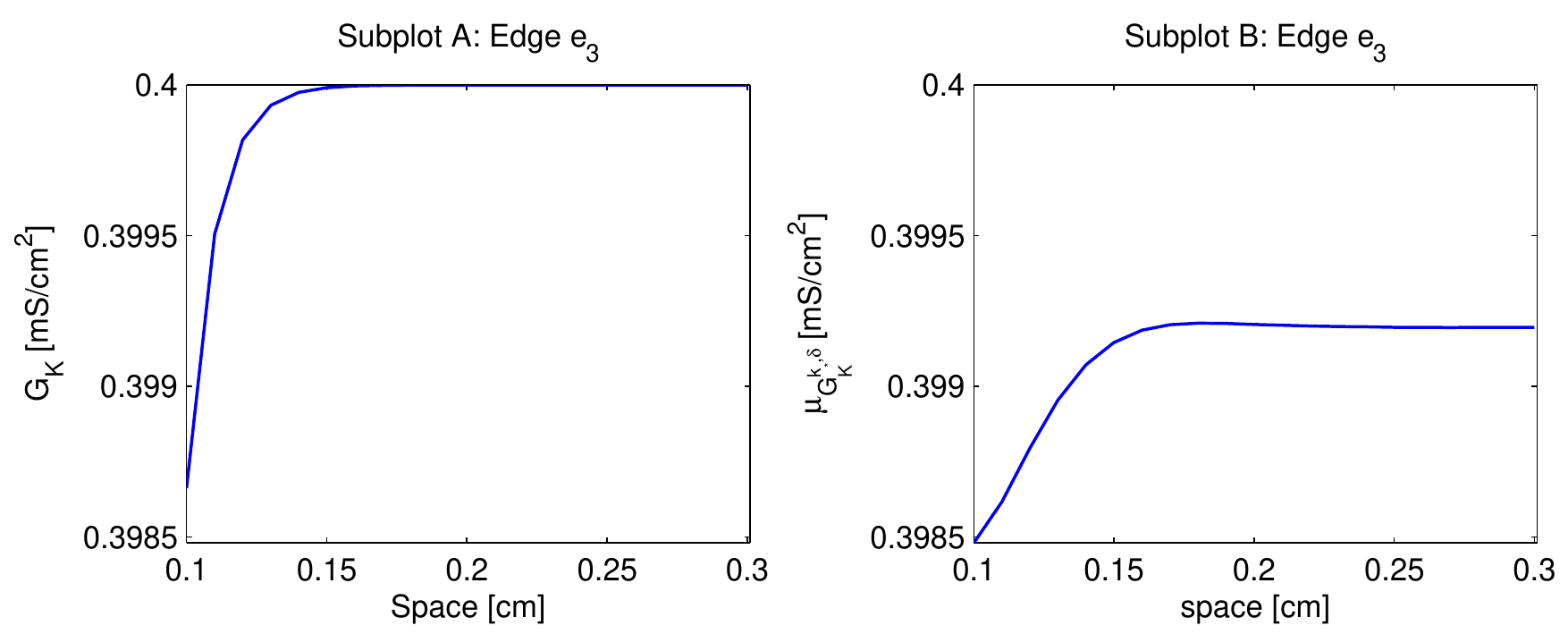}
	\includegraphics[ height=5cm, width=14.cm]{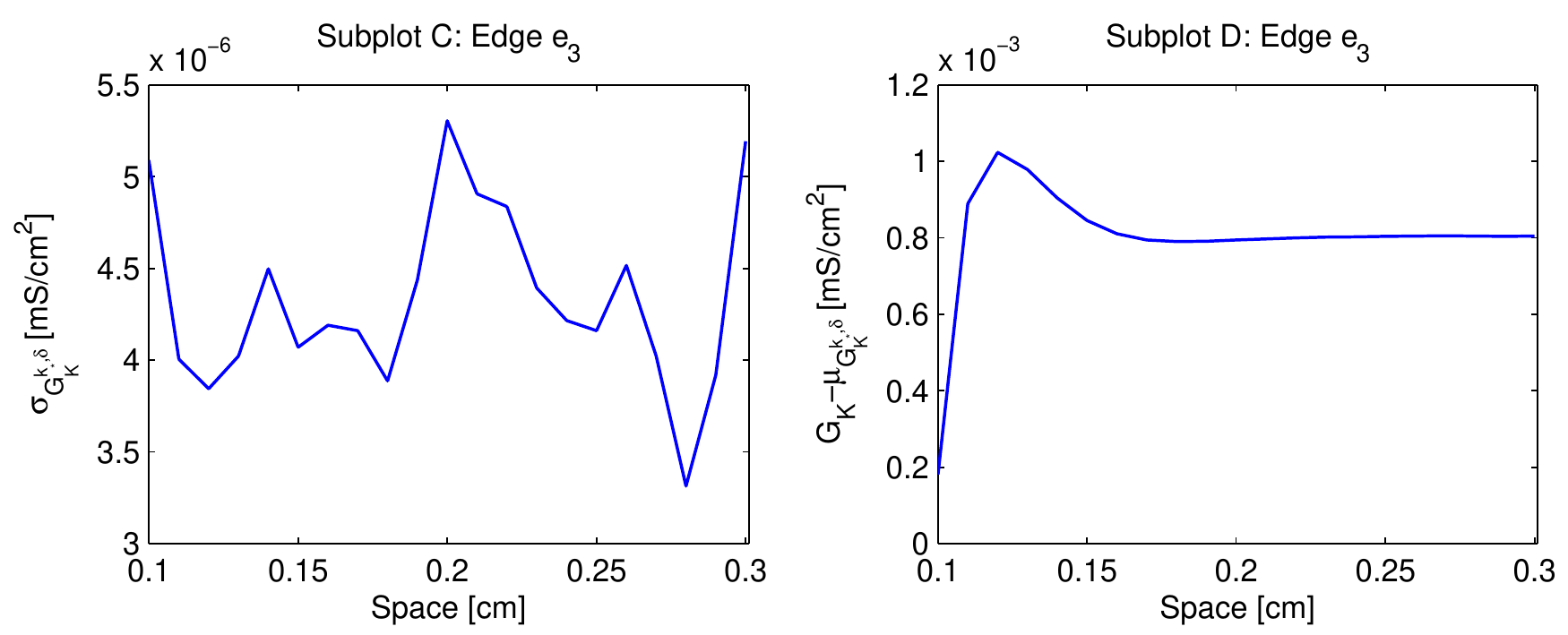}
	\caption{{\footnotesize Plots for Example~\ref{Exa3.4} and edge $e_3$}. See Figure~\ref{Ex1-fig2} for the  subplots description.} 
	\label{Ex4-e3fig2}
\end{figure}

\section{Conclusions}\label{s:conc}
The inverse problem of finding conductances from voltage data for a given neuron model is important and difficult. This paper presents and tests a way to approximate them based on the minimal error iterative method and applied to a cable model. Although the scheme has a somewhat straightforward description, it is not practical in its original formulation since computing the adjoint of the G\^ateux derivative seems unfeasible in general. The development of auxiliary equations to overcome such hurdle requires some art, and is done on a case-by-case basis.

Certainly, the method has limitations and is no panacea. How well the method performs depends on the noise, on how close to the solution is the initial guess, the amount of data, and on the model used. There is a nontrivial interplay between all those conditions. For instance, determining two conductances is harder than determining one, finding conductances that depend on time and space is harder than finding conductances that depend on space only. Also, having data at all points is better than if the data is available at isolated points only.

Our examples display some of these features. For some of them, the method performs nicely, capturing the correct conductances. If the level of noise increases, the method delivers reasonable approximations (see our Online Resource for that), but these approximations cannot be qualitatively better than the available data, especially for inverse problems. Inverse problems are unstable, and thus, not well-posed and difficult to solve in general. Even when the method does not do a good job in capturing the correct conductance, the computed residual is small, and whenever the residual is of the same order as the noise, there is no point in iterating any further. 

Under reasonable conditions, the method yields good results even in the presence of noise, as shown here. It is also general enough to accommodate for different geometries (straight cables and branched trees), and different measured data (endpoints, whole cable). 

We believe that methods that are capable of inferring spatial properties of neurons are in demand and will grow in importance, in particular due to new imagining techniques such as VSDI. Also, regularizing methods for inverse problems are applied in several research fields and they can also contribute to Neuroscience.

Finally, we note that inverse problems are nonlinear by nature, even if the equations involved are linear. If the equations are nonlinear, then the problem becomes much harder since finding the adjoint of the Gateaux derivatives of the operators involved is highly nontrivial. The present work represents a first step towards finding conductances for neuronal models. The final goal is to develop a similar method for the Hodgkin-Huxley system, but the highly nonlinear nature of those equations makes the task daunting, even when considering ODEs.

\appendix
\section{Abstract Formulation}\label{Apendix}
In practice, ${V|}_\Gamma$ is  the data and given such information and under the assumption that Eq.~\eqref{equ3} holds, the inverse problem under consideration is to recover or approximate the conductances. The lack of stability, characteristic of ill-posed problems can be tamed by regularization methods~\cite{engl1996,kaltenbacher2008,kirsch2011}, in particular by the  minimal error method.

Consider for simplicity $T>0$. Let $\Omega=\{(t,x):\, 0\leq t\leq T, 0\leq x \leq L\}$, and
\begin{gather*}
H(F)=\bigl(L^2(\Omega)\bigr)^{N_\ion}
=\biggl\{f:\Omega\rightarrow\R^{N_\ion}:\,\int_\Omega|f(\xi)|^2d\,\xi<\infty \biggr\}, 
\\
R(F)=L^2(\Gamma)=\biggl\{f:\Gamma \rightarrow \R:\,\int_\Gamma|f(\xi)|^2d\,\xi<\infty\biggr\}.
\end{gather*}

It is well-known that $H(F)$ and $R(F)$ become Hilbert spaces under the inner products 
\begin{equation*}
\begin{gathered}
{\langle f,h\rangle_{H(F)}=\int_{\Omega}f(\xi)h(\xi)d\xi,}
\qquad
{\langle f,h\rangle_{R(F)}=\int_{\Gamma}f(\xi)h(\xi)d\xi,}. 
\end{gathered}
\end{equation*}
and the associated norms $\|f\|_{H(F)}=\langle f,f\rangle_{H(F)}^{1/2}$, $\|f\|_{R(F)}=\langle f,f\rangle_{R(F)}^{1/2}$. Note that the inner product on $R(F)$ depends on $\Gamma$, see Eq.~\eqref{e:alphadef}, as follows:

\begin{multline}\label{equ5}
\langle f,h\rangle_{R(F)}=\alpha_1\int_0^L\int_0^Tf(t,x)h(t,x)\,dt\,dx
                    +\alpha_2\int_0^Tf(t,0)h(t,0)dt\\
                    +\alpha_2\int_0^Tf(t,L)g(t,L)\,dt,
\end{multline}
where $\alpha_1$, $\alpha_2$ are as in Eq.~\eqref{e:alphadef}.

The set  $D(F)=\left(L^{\infty}(\Omega)\right)^{N_\ion}\subset H(F)$ is the Banach space of ``essentially" bounded functions (see~\cite{kreyszig1978} for precise definitions). Consider the operator $F:D(F)\subset H(F)\rightarrow R(F)$ defined by $F(\bG)=V|_{\Gamma}$. Our goal is to find an approximation for $\bG$ using the minimal error iteration defined by Eq.~\eqref{equ6}.

In the next Theorem we show how to  obtain Eq.~\eqref{equ16} from Eq.~\eqref{equ6}.

\begin{thm}\label{t:main}
Consider the  iteration in Eq.~\eqref{equ6}. Then Eq.~\eqref{equ16} holds
\end{thm}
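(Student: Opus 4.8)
The plan is to derive the explicit update~\eqref{equ16} in two movements: first compute the G\^ateaux derivative $F'(\bG)$ by linearizing the forward problem~\eqref{equ3}, and then identify its adjoint $F'(\bG)^*$ by pairing the linearized equation against the auxiliary state $U$, integrating by parts, and showing that the final- and boundary-conditions imposed on $U$ in~\eqref{equ10} are exactly what is needed to annihilate all the spurious boundary terms and leave behind the $R(F)$ inner product. Once I show that the $i$-th component of $F'(\bG)^*$ applied to the residual equals $-(V-E_i)U$, inserting this into~\eqref{equ6} gives~\eqref{equ16} componentwise. I work in the single-cable setting of the Appendix ($\Omega=[0,T]\times[0,L]$, $H(F)=(L^2(\Omega))^{N_\ion}$); the branched case~\eqref{equ20}--\eqref{equ21} is identical, with transmission conditions at the interior vertex replacing the corresponding interface boundary terms.

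First I would fix $\bG\in D(F)$ and a direction $H=(H_1,\dots,H_{N_\ion})\in H(F)$, and write $V^\epsilon=V+\epsilon v+o(\epsilon)$ for the solution of~\eqref{equ3} with $\bG$ replaced by $\bG+\epsilon H$. Collecting the $O(\epsilon)$ terms shows that the sensitivity $v$ solves the linearized problem
\begin{equation*}
C_M v_t=\frac{r_a}{2R}v_{xx}-G_Lv-\sum_{i\in\Ion}G_iv-\sum_{i\in\Ion}H_i(V-E_i)\quad\text{in }(0,T)\times(0,L),
\end{equation*}
with homogeneous data $v(0,x)=0$, $v_x(t,0)=v_x(t,L)=0$, since the data $r,p,q$ are held fixed. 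By definition $F'(\bG)H=v|_\Gamma$.

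Next I would construct the adjoint. Multiplying the sensitivity equation by $U$, integrating over $(0,T)\times(0,L)$, and moving derivatives onto $U$ (once in $t$, twice in $x$) yields
\begin{multline*}
-\sum_{i\in\Ion}\int_0^T\!\!\int_0^L H_i(V-E_i)U\,dx\,dt
=\int_0^T\!\!\int_0^L v\Bigl(-\tfrac{r_a}{2R}U_{xx}-C_MU_t+G_LU+\sum_{i\in\Ion}G_iU\Bigr)dx\,dt\\
+C_M\!\int_0^L\!\bigl[vU\bigr]_{t=0}^{t=T}dx
-\tfrac{r_a}{2R}\!\int_0^T\!\bigl[Uv_x-vU_x\bigr]_{x=0}^{x=L}dt.
\end{multline*}
The final condition $U(T,x)=0$ together with $v(0,x)=0$ kills the $t$-boundary term, while the homogeneous Neumann conditions on $v$ and the prescribed $U_x(t,0),U_x(t,L)$ from~\eqref{equ10} turn the $x$-boundary term into $\alpha_2\int_0^T\bigl(v(t,0)(V^\delta-V)(t,0)+v(t,L)(V^\delta-V)(t,L)\bigr)dt$. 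Using that $U$ solves~\eqref{equ10}, the first volume integral becomes $\alpha_1\int_0^T\int_0^L v(V^\delta-V)\,dx\,dt$. By the definition~\eqref{equ5} of $\langle\cdot,\cdot\rangle_{R(F)}$, the right-hand side is precisely $\langle v|_\Gamma,(V^\delta-V)|_\Gamma\rangle_{R(F)}=\langle F'(\bG)H,\,V^\delta|_\Gamma-F(\bG)\rangle_{R(F)}$, whereas the left-hand side is $\langle H,(-(V-E_i)U)_{i}\rangle_{H(F)}$. Since $H$ ranges over a dense subset of $H(F)$, this identifies $\bigl(F'(\bG)^*(V^\delta|_\Gamma-F(\bG))\bigr)_i=-(V-E_i)U$.

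Finally, evaluating at $\bG=\bG^{k,\delta}$ (so that $V=V^{k,\delta}$ and $U=U^k$) and substituting the $i$-th component into~\eqref{equ6} produces $G_i^{k+1,\delta}=G_i^{k,\delta}-w^{k,\delta}(V^{k,\delta}-E_i)U^k$, which is exactly~\eqref{equ16}. The step I expect to be the main obstacle is the boundary-term bookkeeping: the signs and the factors $\tfrac{2R}{r_a}$ in the final and Neumann conditions of~\eqref{equ10} must be chosen so that \emph{every} boundary contribution either cancels or reassembles precisely into the $\alpha_1,\alpha_2$-weighted inner product~\eqref{equ5}. Matching those constants is where the construction of~\eqref{equ10} is genuinely reverse-engineered from the calculation. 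Secondary technical points, which I would state but not belabor, are the G\^ateaux differentiability of $F$, the well-posedness of~\eqref{equ10} as a backward parabolic problem, and the density argument needed to pass from the variational identity to the pointwise formula for $F'(\bG)^*$.
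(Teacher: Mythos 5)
Your proposal is correct and follows essentially the same route as the paper's proof: linearize the forward problem to obtain the sensitivity equation (your $v$ is the paper's $W^k$), pair it against the adjoint state $U$ via integration by parts so that the final/boundary conditions in~\eqref{equ10} reassemble the boundary terms into the $\alpha_1,\alpha_2$-weighted inner product~\eqref{equ5}, and conclude by density of the test directions. The only cosmetic difference is that the paper multiplies the $U$-equation by $-W^k$ rather than the $W$-equation by $U$, which is the same computation.
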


\begin{proof}
Given $\bG^{k,\delta}\in D(F)$ and $\btheta=(\theta_1,\dots,\theta_{N_\ion})\in H(F)$, the G\^ateux derivative of $F$ at $\bG^{k,\delta}$ in the direction $\btheta$ is given by 
\begin{equation}\label{equ8}
F'(\bG^{k,\delta})(\btheta)
=\lim_{\lambda\to0}\frac{F(\bG^{k,\delta}+\lambda\btheta)-F(\bG^{k,\delta})}{\lambda}
=W^k|_\Gamma, 
\end{equation}
where $W^k$ solves 
\begin{equation}\label{equ9}
\left \{\begin{array}{l}
\displaystyle\frac{r_a}{2R}W_{xx}^k(t,x)-C_MW_t^k(t,x)-G_LW^k(t,x)
\vspace*{0.1cm}\\\hspace*{4cm}\displaystyle-\sum_{i\in\Ion}G_i^{k,\delta}(t,x)W^k(t,x)
=\sum_{i\in\Ion}\theta_i(V^{k,\delta}(t,x)-E_i)\quad\text{in }\Omega, 
\vspace*{0.2cm}\\
W^k(0,x)=0\quad\text{for }\;0<x<L,
\qquad 
\vspace*{0.2cm}\\
W^k_x(t,0)=W^k_x(t,L)=0\quad\text{for }\;0<t<T, 
\end{array} \right.
\end{equation}
and $V^{k,\delta}$ solves Eq.~\eqref{equ3} with $G_i$ replaced by $G_i^{k,\delta}$. To obtain Eq.~\eqref{equ9} from Eq.~\eqref{equ8}, it is enough to consider the difference between problem in Eq.~\eqref{equ3} with coefficients $\bG^{k,\delta}+\lambda\btheta$ and $\bG^{k,\delta}$, divide by $\lambda$ and take the limit $\lambda\to0$. 

Let $V^{k,\delta}|_\Gamma=F(\bG^{k,\delta})$. From the minimal error iteration in  Eq.~\eqref{equ6}, we gather that
\begin{eqnarray*}
\la\bG^{k+1,\delta}-\bG^{k,\delta},\btheta\ra_{H(F)}&=&w^{k,\delta}\la F'(\bG^{k,\delta})^*(V^\delta|_\Gamma-F(\bG^{k,\delta})),\btheta\ra_{H(F)}\\
&=&w^{k,\delta}\la F'(\bG^{k,\delta})^*(V^\delta|_\Gamma-V^{k,\delta}|_\Gamma),\btheta\ra_{H(F)}. 
\end{eqnarray*}

By definition of adjoint operator, 
\begin{multline}\label{equ11}
\la\bG^{k+1,\delta}-\bG^{k,\delta},\btheta\ra_{H(F)}=w^{k,\delta}\la V^\delta|_\Gamma-V^{k,\delta}|_\Gamma,F'(\bG^{k,\delta})(\btheta)\rangle_{R(F)}\\
=w^{k,\delta}\la V^\delta|_\Gamma-V^{k,\delta}|_\Gamma,W^k|_\Gamma \ra_{R(F)},
\end{multline}
from Eq.~\eqref{equ8}. 

Although Eq.~\eqref{equ11} yields an interesting relation, it carries an impeding dependence on $\btheta$ through $W^k$. It is possible to avoid that by performing some ``trick'' manipulations.

 Multiplying the first equation from~\eqref{equ10} by $-W^k$, and integrating in the intervals $[0,T]$ and $[0,L]$ we gather that 
\begin{multline}\label{equ12}
\int_0^L\int_0^T\frac{r_a}{2R} U^k_{xx}(t,x)W^k(t,x)\,dt\,dx+\int_0^L\int_0^TC_M\;U^k_t(t,x)W^k(t,x)\,dt\,dx
\\
-\int_0^L\int_0^TG_LU^k(t,x)W^k(t,x)dtdx-\int_0^L\int_0^T\sum_{i \in ion}G_i^{k,\delta}(t,x)\;U^k(t,x)W^k(t,x)\,dt\,dx=
\\
-\alpha_1\int_0^L\int_0^T \left(V^\delta(t,x)-V^{k,\delta}(t,x)\right) W^k(t,x)\,dt\,dx. 
\end{multline}

Integrating by parts twice the first term from Eq.~\eqref{equ12} with respect to the space variable, and using the boundary conditions for $W^k$ we have
\begin{multline}\label{equ13}
\int_0^L\int_0^T\frac{r_a}{2R} U^k_{xx}(t,x)W^k(t,x)\,dt\,dx=\int_0^L\int_0^T\frac{r_a}{2R}U^k(t,x)W_{xx}^k(t,x)\,dt\,dx\\
+\frac{r_a}{2R}\int_0^T U^k_x(t,x)W^k(t,x)|_0^L\,dt, 
\end{multline}
where we denote $U^k_x(t,x)W^k(t,x)|_0^L=U^k(t,L)W^k(t,L)-U^k(t,0)W^k(t,0)$. 
Similarly, integrating by parts the second term from Eq.~\eqref{equ12} with respect to time and using the initial condition of $W^k$ and the final condition of $U^k$, we gather that 
\begin{equation}\label{equ14}
\int_0^L\int_0^TC_M\;U^k_{t}(t,x)W^k(t,x)\,dt\,dx=-\int_0^L\int_0^TC_MU^k(t,x)W_t^k(t,x)\,dt\,dx. 
\end{equation}

Substituting Eqs.~\eqref{equ13} and ~\eqref{equ14} in Eq.~\eqref{equ12}, it follows that 
\begin{multline*}
\int_0^L\int_0^T\Biggl(\frac{r_a}{2R} W_{xx}^k(t,x)-C_MW_t^k(t,x)-G_LW^k(t,x)-\sum_{i \in ion}G_i^{k,\delta}(t,x)W^k(t,x) \Biggr)U^k(t,x)\,dt dx
\\
=-\alpha_1\int_0^L\int_0^T \bigl(V^\delta(t,x)-V^{k,\delta}(t,x)\bigr)W^k(t,x)\,dt\,dx-\frac{r_a}{2R}\int_0^TU^k_x(t,x)W^k(t,x)|_0^L\,dt. 
\end{multline*}

Substituting the first equation from \eqref{equ9} in the previous equation, we obtain 
\begin{multline*}
\int_0^L\int_0^T\sum_{i \in \Ion}\theta_i(V^{k,\delta}(t,x)-E_i) U^k(t,x) \,dt \,dx \\
=-\alpha_1\int_0^L\int_0^T \left(V^\delta(t,x)-V^{k,\delta}(t,x)\right) W^k(t,x)\,dt dx
-\frac{r_a}{2R}\int_0^TU^k_x(t,x)W^k(t,x)|_0^L\,dt. 
\end{multline*}

From the boundary conditions from Eq.~\eqref{equ10}, the following expression holds:
\begin{multline*}
\int_0^L\int_0^T\sum_{i \in \Ion}\theta_i(V^{k,\delta}(t,x)-E_i) U^k(t,x) \,dt dx=\\
-\alpha_1\int_0^L\int_0^T \left(V^\delta(t,x)-V^{k,\delta}(t,x)\right) W^k(t,x)\,dt dx
-\alpha_2\int_0^T\left(V^\delta(t,0)-V^{k,\delta}(t,0)\right)W^k(t,0)\\
-\alpha_2\int_0^T\left(V^\delta(t,L)-V^{k,\delta}(t,L)\right)W^k(t,L)\,dt.
\end{multline*}

From the previous equation and the definition of the inner product in Eq.~\eqref{equ5}, we have
\begin{equation}\label{equ15}
\int_0^L\int_0^T\sum_{i \in \Ion}\theta_i(V^{k,\delta}(t,x)-E_i) U^k(t,x) \,dt dx =-\la V^\delta|_\Gamma-V^{k,\delta}|_\Gamma,W^k|_\Gamma \ra _{R(F)}.
\end{equation}

From Eqs.~\eqref{equ11} and~\eqref{equ15} we have 
\begin{multline*}
\int_0^L\int_0^T\sum_{i\in\Ion}\theta_i\left(  G_i^{k+1,\delta}(t,x)-G_i^{k,\delta}(t,x)\right)\,dt dx 
\\
=-w^{k,\delta}\int_0^L\int_0^T\sum_{i\in\Ion}\theta_i(V^{k,\delta}(t,x)-E_i) U^k(t,x) \,dt dx.
\end{multline*}

Since $\btheta\in H(F)$ is arbitrary and $L^\infty(\Omega)$ is dense in $L^2(\Omega)$, we gather that the following  iteration holds: 
\[G_i^{k+1,\delta}(t,x)=G_i^{k,\delta}(t,x)-w^{k,\delta}(V^{k,\delta}(t,x)-E_i)U^k(t,x)\quad\text{for all }i\in\Ion.
\]

\end{proof}

\bibliographystyle{acm}
\bibliography{mybibfile}

\end{document}